\theoremstyle{plain}
\newtheorem{theorem}{Theorem}[section]
\newtheorem{proposition}[theorem]{Proposition}
\newtheorem{lemma}[theorem]{Lemma}
\theoremstyle{remark}
\def\free{\ensuremath{\mathcal L}}
\begin{document}

\title{The second homology group of current Lie algebras}
\author[P.~Zusmanovich]{Paul Zusmanovich}
\thanks{\noindent S.~M.~F. Ast\'erisque 226 (1994)}

\maketitle

\setcounter{section}{-1}

\section{Introduction}
It is a well known fact that the current Lie algebra 
$\mathcal G \otimes \mathbb C[[t, t^{-1}]]$ associated
to a simple finite-dimensional Lie $\mathbb C$-algebra $\mathcal G$ 
has a central extension leading to the affine non-twisted Kac-Moody algebra 
$\mathcal G \otimes \mathbb C[[t, t^{-1}]] \oplus  \mathbb Cz$ with bracket

\begin{equation}\notag
\{x \otimes f, y \otimes g\} = [x, y] \otimes fg + (x, y)Res \frac{df}{dt}g \> z
\end{equation}

\noindent where $( \cdot \,, \cdot )$ is the Killing form on $\mathcal G$ (cf. \cite{Kac}).

In view of the known relationship between central extensions and the second
(co)homology group with coefficients in the trivial module, one of the
main results of this paper can be considered as a generalization of this fact
for general current Lie algebras, i.e., Lie algebras of the form $L \otimes A$, where
$L$ is a Lie algebra and $A$ is associative commutative algebra, equipped with
bracket
\begin{equation}\notag
[x \otimes a, y \otimes b] = [x, y] \otimes ab.
\end{equation}

\begin{theorem}\label{0.1}
Let $L$ be an arbitrary Lie algebra over a field $K$ of characteristic $p \ne 2$ 
and $A$ an associative commutative algebra with unit over $K$.
Then there is an isomorphism of $K$-vector spaces:
\begin{multline}\tag{0.1}
H_2(L \otimes A) \simeq H_2(L) \otimes A \>\oplus\> B(L) \otimes HC_1(A)   \\
\oplus\> \wedge^2 (L/[L,L]) \otimes Ker(S^2(A) \to A) \>\oplus\> S^2(L/[L,L]) \otimes T(A)
\end{multline}
where the mapping $S^2(A) \to A$ induced by multiplication in $A$ and 
$T(A) = \langle ab \wedge c + ca \wedge b + bc \wedge a \,|\, a, b, c \in A \rangle$.
\end{theorem}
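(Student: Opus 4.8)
The plan is to compute $H_2(L\otimes A)=\ker d_2/\operatorname{im}d_3$ directly from the Chevalley--Eilenberg chain complex $\wedge^{\bullet}(L\otimes A)$ with trivial coefficients, where $d_2(u\wedge v)=[u,v]$ and $d_3(u\wedge v\wedge w)=[u,v]\wedge w-[u,w]\wedge v+[v,w]\wedge u$; the isomorphism~(0.1) will be non-canonical, depending on choices of complements, which is what $\operatorname{char}K\ne2$ makes available. Since $2$ is invertible there is the natural splitting
\[
\wedge^{2}(L\otimes A)=\bigl(\wedge^{2}L\otimes S^{2}A\bigr)\oplus\bigl(S^{2}L\otimes\wedge^{2}A\bigr),
\]
the summands being spanned by $x\wedge y\otimes p\odot q\mapsto\tfrac12\bigl((x\otimes p)\wedge(y\otimes q)+(x\otimes q)\wedge(y\otimes p)\bigr)$ and $x\odot y\otimes p\wedge q\mapsto\tfrac12\bigl((x\otimes p)\wedge(y\otimes q)-(x\otimes q)\wedge(y\otimes p)\bigr)$, where $\odot$ denotes the product of $S^{\bullet}A$. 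A one-line check shows that $d_2$ annihilates the second summand and restricts to $d_2^{L}\otimes\mu$ on the first ($d_2^{L}\colon\wedge^{2}L\to L$ the bracket, $\mu\colon S^{2}A\to A$ multiplication); writing $Z_2(L)=\ker d_2^{L}$, this gives $\ker d_2=\ker(d_2^{L}\otimes\mu)\oplus(S^{2}L\otimes\wedge^{2}A)$ and $\ker(d_2^{L}\otimes\mu)=(\wedge^{2}L\otimes\ker\mu)+(Z_2(L)\otimes S^{2}A)$. Splitting $\mu$ by $a\mapsto1\odot a$ and $Z_2(L)$ by a complement of $\operatorname{im}d_3^{L}$ rewrites the latter as $\wedge^{2}L\otimes\ker\mu\oplus H_2(L)\otimes A\oplus(\operatorname{im}d_3^{L})\otimes A$; the evident chain map $\wedge^{\bullet}L\otimes A\to\wedge^{\bullet}(L\otimes A)$ induced by $L=L\otimes1$ and ``multiplication of a tensor factor by $a$'' then identifies $H_2(L)\otimes A$ with a direct summand of $H_2(L\otimes A)$, accounting for the first term of~(0.1).

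The core is the description of $\operatorname{im}d_3$. Evaluating $d_3$ on $(x\otimes a)\wedge(y\otimes b)\wedge(z\otimes c)$ and projecting onto the two summands above yields the $\wedge^{2}L\otimes S^{2}A$-component $[x,y]\wedge z\otimes ab\odot c-[x,z]\wedge y\otimes ac\odot b+[y,z]\wedge x\otimes bc\odot a$ and the $S^{2}L\otimes\wedge^{2}A$-component $[x,y]\odot z\otimes ab\wedge c-[x,z]\odot y\otimes ac\wedge b+[y,z]\odot x\otimes bc\wedge a$; I would work with the linear spans of these two families, rather than decomposing $\wedge^{3}(L\otimes A)$ into Schur functors (which would require $\operatorname{char}K\ne3$). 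The key lemma is that the projection of $\operatorname{im}d_3$ to $S^{2}L\otimes\wedge^{2}A$ equals
\[
\bigl([L,L]\odot L\bigr)\otimes T(A)+\bigl(L\cdot S^{2}L\bigr)\otimes\wedge^{2}A,
\]
where $L\cdot S^{2}L=\langle\,[u,v]\odot w+v\odot[u,w]\,\rangle$ is the submodule generated by the adjoint action, so that $S^{2}L/(L\cdot S^{2}L)=(S^{2}L)_{L}=B(L)$; the inclusion ``$\subseteq$'' uses the Jacobi-type (anti)symmetries of the $L$-factor together with the elementary relation $p\odot q-pq\odot1\in\ker\mu$, and ``$\supseteq$'' follows by specializing $(a,b,c)$ (to $a=b=1$, and to $a=b=c$) and using that $ab\wedge c-a\wedge bc+ca\wedge b=ab\wedge c+bc\wedge a+ca\wedge b$ is a defining generator of $T(A)$. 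Granting this, and using moreover $S^{2}L/([L,L]\odot L)=S^{2}(L/[L,L])$ and $HC_1(A)\cong\wedge^{2}A/T(A)$ (where $\operatorname{char}K\ne2$ serves to identify $A^{\otimes2}/\langle a\otimes b+b\otimes a\rangle$ with $\wedge^{2}A$), a short diagram chase shows that the $S^{2}L\otimes\wedge^{2}A$-part of $\ker d_2$ contributes, modulo boundaries, precisely $B(L)\otimes HC_1(A)\oplus S^{2}(L/[L,L])\otimes T(A)$ --- the second and fourth terms of~(0.1).

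It remains to treat the $\wedge^{2}L\otimes S^{2}A$-part. One shows that, after the copy of $H_2(L)\otimes A$ found above is split off, the summand $\wedge^{2}L\otimes\ker\mu$ collapses modulo boundaries precisely to $\wedge^{2}(L/[L,L])\otimes\ker\mu$ --- using that the relevant boundaries have $L$-factor in $[L,L]\wedge L$ and that $\wedge^{2}L/([L,L]\wedge L)=\wedge^{2}(L/[L,L])$, together with compatible choices of complements for $Z_2(L)\supseteq\operatorname{im}d_3^{L}$, $S^{2}A\supseteq\ker\mu$ and $\wedge^{2}A\supseteq T(A)$. I expect this last bookkeeping to be the main obstacle: $\operatorname{im}d_3$ is \emph{not} the direct sum of its two projections --- boundaries such as $d_3\bigl((x\otimes a)\wedge(y\otimes1)\wedge(z\otimes c)\bigr)$ genuinely mix the two summands --- so the naive component-wise quotient over-counts, and one must organize the ``mixed'' relations (those whose $L$-factor already lies in $[L,L]$) so as to check that the short exact sequence relating the $\wedge^{2}L\otimes S^{2}A$-part and the $S^{2}L\otimes\wedge^{2}A$-part of $H_2(L\otimes A)$ splits in exactly the asserted way, with no surviving cross terms. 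Everything else is routine multilinear algebra once these identifications are in place.
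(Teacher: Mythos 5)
Your strategy --- a direct computation in the Chevalley--Eilenberg complex using the characteristic-$\ne 2$ splitting $\wedge^{2}(L\otimes A)=\bigl(\wedge^{2}L\otimes S^{2}A\bigr)\oplus\bigl(S^{2}L\otimes\wedge^{2}A\bigr)$ --- is not the route the paper takes for Theorem 0.1 (there the proof goes through the Hopf formula, a variant of it for $H_2^{ess}$, and a presentation of $L\otimes A$ built from one of $L$), but it is essentially the method the paper itself uses in \S 5 for $(A\otimes B)^{(-)}$, so it is viable in principle. Moreover your preliminary steps are correct: $d_2$ kills $S^{2}L\otimes\wedge^{2}A$ and equals $d_2^{L}\otimes\mu$ on the other summand, and your key lemma identifying the projection of $\operatorname{im}d_3$ onto $S^{2}L\otimes\wedge^{2}A$ with $([L,L]\odot L)\otimes T(A)+(L\cdot S^{2}L)\otimes\wedge^{2}A$ is true; it is the exact analogue of the paper's Lemma 3.1 (and of Lemma 5.2), proved there by the same specialization-to-$1$ trick, and quotienting by it does reproduce $B(L)\otimes HC_1(A)\oplus S^{2}(L/[L,L])\otimes T(A)$ via the sequences (1.3)--(1.4).

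The genuine gap is the step you defer as ``bookkeeping,'' and it is in fact the core of the theorem. Knowing the two \emph{projections} of $\operatorname{im}d_3$ does not compute $\ker d_2/\operatorname{im}d_3$: in the exact sequence
\begin{equation}\notag
0\to\frac{(S^{2}L\otimes\wedge^{2}A)\cap\ker d_2}{(S^{2}L\otimes\wedge^{2}A)\cap\operatorname{im}d_3}\to H_2(L\otimes A)\to\frac{q(\ker d_2)}{q(\operatorname{im}d_3)}\to 0
\end{equation}
(with $q$ the projection onto $\wedge^{2}L\otimes S^{2}A$), the subobject involves the \emph{intersection} of $\operatorname{im}d_3$ with $S^{2}L\otimes\wedge^{2}A$, not its projection; your argument needs the statement that every element of the projection is realized by a boundary lying entirely in that summand, i.e.\ that the ``mixed'' components of boundaries can always be corrected and no cross terms survive. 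You assert this should work out but give no mechanism, and it is precisely where the paper's real work sits: in \S 3 it is Lemmas 3.2--3.4 --- above all the injectivity of $\beta$ (Lemma 3.4), which rests on the explicit identities of Lemma 2.2 exhibiting the needed symmetrized combinations as elements of $[\free(X(A)),Ker\phi]$ --- and in \S 5 it is the sequence (5.1) together with Lemma 5.3, where the projection $p'$ being the identity on $Ker\,p$ substitutes for the intersection computation. Without an analogue of that argument your plan only bounds $H_2(L\otimes A)$ between two component-wise quotients and does not establish (0.1). Two secondary points: the map $x_1\wedge x_2\otimes a\mapsto (x_1\otimes a)\wedge(x_2\otimes 1)$ you invoke to split off $H_2(L)\otimes A$ is not a chain map (the $a$ gets distributed differently in degree 3), so even that summand requires an argument --- in the paper it falls out of (3.1) via Lemma 1.2; and the identification of the quotient piece with $H_2(L)\otimes A\oplus\wedge^{2}(L/[L,L])\otimes\ker(S^{2}A\to A)$ is the content of Lemma 1.1 plus Lemma 3.6, routine but not immediate from ``the relevant boundaries have $L$-factor in $[L,L]\wedge L$.''
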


Here $B(L)$ is the space of coinvariants of the $L$-action on $S^2(L)$, $HC_1(A)$
is the first-order cyclic homology group of $A$, and $\wedge^2$ and $S^2$ denote the skew and
symmetric products, respectively. Notice that in the case $L = [L, L]$, the third
and fourth terms in the right-hand side of (0.1) vanish.


Many particular cases of this theorem were proved by different authors
previously. An exhaustive description of all previous works on this theme
may be found in \cite{H} and \cite{S}.

For the first time, a cohomology formula of the type (0.1) has appeared in
\cite{S}, where Theorem 0.1 was proved assuming that $L$ is 1-generated over an
augmentation ideal of its enveloping algebra. A. Haddi \cite{H} obtained a result
similar to Theorem 0.1 in the case where $K$ is a field of characteristic zero
(however, it seems that his arguments work over any field of characteristic
$p \ne 2,3$).

Our method of proof differs from all previous ones and is based on the Hopf
formula expressing $H_2(L)$ in terms of a presentation 
$0 \to I \to \free(X) \to L \to 0$,
where $\free = \free(X)$ is the free Lie algebra over $K$
freely generated by the set $X$:

\bigskip
\begin{equation}\tag{0.2}
H_2(L) \simeq ([\free, \free] \cap I)/[\free, I]
\end{equation}
\bigskip

\noindent (see, for example, \cite{KS}).

The contents of the paper are as follows. \S 1 is devoted to some technical
preliminary results. In \S 2 we determine the presentation of a current Lie
algebra $L \otimes A$. In \S 3 Theorem 0.1 is proved. As it corollary we get in \S 4
a description of the space $B(L \otimes A)$. In \S 5 a "noncommutative version" of
Theorem 0.1 is proved (Theorem 5.1). Namely, we derive the formula for the
second homology group of the Lie algebra $(A \otimes B)^{(-)}$, where $A$, $B$ 
are associative
(noncommutative) algebras with unit, and (--) in superscript denotes passing
to the associated Lie algebra. The technique used here is no longer based on the
Hopf formula, but on more or less direct computations in some factorspaces
of cycles. However, arguments used in proof, resemble, to a great extent, the
previous ones. Getting a particular case $B = M_n(K)$, we recover, after a
slight modification, an isomorphism $H_2(sl_n(A)) \simeq HC_1(A)$ obtained in \cite{KL}.

The following notational convention will be used: the letters $a, b, c, \dots$,
possibly with sub- and superscripts, denote elements of algebra $A$, while letters 
$u, v, w, \dots$ denote elements of the free Lie algebra $\free(X)$ with the set of
generators $X = \{ x_i \}$, if the otherwise is not stated. $\free^n(X)$ denotes 
the $n$th term in the derived series of $\free(X)$. 
The arrows $\rightarrowtail$ and $\twoheadrightarrow$ denote injection
and surjection, respectively.

All other undefined notions and notation are standard, and may be found,
for example, in \cite{F} for Lie algebra (co)homology, and in \cite{LQ} for cyclic 
homology. In some places we use diagram chasing and $3 \times 3$-Lemma without explicitly
mentioning it.

\section*{Acknowledgements}

This work was started during my graduate studies in the Institute of Mathematics 
and Mechanics of the Kazakh Academy of Sciences and finished in
the Bar-Ilan University. My thanks are due to Askar Dzhumadil'daev and Steven
Shnider, my former and present supervisors, for various sort of help, mathematical 
and other, and for interest in my work, and specially to Steven
Shnider for careful reading of preliminary versions of manuscript. Also the
financial support of Bar-Ilan University and Bat-Sheva Rotshild foundation
is gratefully acknowledged. Finally, I am grateful to the organizers of the
Conference on K-Theory held in June-July 1992 in Strasbourg for invitation
and opportunity to present there the results of this paper.

\section{Preliminaries}

Looking at formula (0.1), one can distinguish between the first two
``principal'' terms and other two ``non-principal'' ones. In order to simplify
calculations, we will obtain a variant of the Hopf formula leading to the appearance 
of ``principal'' terms only, and then the general case will be derived.

Each nonperfect Lie algebra $L$, i.e., not coinciding with its commutant
$[L, L]$, possesses a ``trivial'' homology classes of $2$-cycles with coefficients in the
module $K$, namely, classes whose representatives do not lie in $L \wedge [L,L]$. More
precisely, consider a natural homomorphism 
$\psi: H_2(L) \to H_2(L/[L, L]) \simeq \wedge^2 (L/[L, L])$ and denote 
$H_2^{ess}(L) = Ker\psi$, the homology classes of ``essential'' cycles.

\begin{lemma}\label{1.1}
One has an exact sequence

\begin{equation}\notag
0 \to H_2^{ess}(L) \to H_2(L) \overset{\psi}\to \wedge^2(L/[L,L]) 
                              \overset{\pi}\to  [L,L]/[[L,L],L]   \to 0
\end{equation}

\noindent where $\pi$ is induced by multiplication in $L$.
\end{lemma}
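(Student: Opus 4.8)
The plan is to work directly with the low-degree part of the Chevalley--Eilenberg chain complex computing $H_*(L)$ with trivial coefficients, namely $\wedge^3 L \overset{d_3}{\to} \wedge^2 L \overset{d_2}{\to} L$ with $d_2(x\wedge y)=[x,y]$, so that $H_2(L)=\ker d_2/\operatorname{im} d_3$. Since $L/[L,L]$ is abelian, all its Chevalley--Eilenberg differentials vanish, whence the identification $H_2(L/[L,L])\simeq\wedge^2(L/[L,L])$ already used in the text; under it $\psi$ becomes the map sending the homology class of a cycle $\sum_i x_i\wedge y_i$ to $\sum_i \bar x_i\wedge\bar y_i$, where the bar denotes reduction modulo $[L,L]$.

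First I would record that $\pi$ is a well-defined surjection. The rule $\bar x\wedge\bar y\mapsto [x,y]+[[L,L],L]$ is independent of the lifts $x,y$ because replacing $x$ by $x+z$ with $z\in[L,L]$ alters $[x,y]$ by $[z,y]\in[[L,L],L]$; it is alternating since $[x,x]=0$; hence it descends to $\pi\colon\wedge^2(L/[L,L])\to[L,L]/[[L,L],L]$, which is onto because $[L,L]$ is spanned by brackets. The substantive point is exactness at $\wedge^2(L/[L,L])$. One inclusion is immediate: for a cycle $c=\sum_i x_i\wedge y_i$ one has $\pi\psi([c])=\sum_i[x_i,y_i]+[[L,L],L]=0$ since already $\sum_i[x_i,y_i]=0$ in $L$. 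For the converse, take $\omega\in\ker\pi$ and write $\omega=\sum_i\bar x_i\wedge\bar y_i$ for arbitrary lifts $x_i,y_i\in L$; then $\sum_i[x_i,y_i]\in[[L,L],L]$, say $\sum_i[x_i,y_i]=\sum_j[[a_j,b_j],c_j]=d_2\bigl(\sum_j[a_j,b_j]\wedge c_j\bigr)$. Hence $c:=\sum_i x_i\wedge y_i-\sum_j[a_j,b_j]\wedge c_j$ is a cycle, and since its correction term involves only elements of $[L,L]$ it dies under reduction modulo $[L,L]$, so $\psi([c])=\omega$; this gives $\ker\pi\subseteq\operatorname{im}\psi$.

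The remaining exactness statements are formal: $\ker\psi=H_2^{ess}(L)$ by definition, and the leftmost arrow is the inclusion of this subspace into $H_2(L)$, which is injective with image $\ker\psi$. I do not anticipate a genuine obstacle here; the only things needing attention are the well-definedness of $\pi$, the right choice of the correcting cycle in the step $\ker\pi\subseteq\operatorname{im}\psi$, and keeping track of the Chevalley--Eilenberg sign conventions.
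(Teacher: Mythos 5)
Your argument is correct, but it takes a different route from the paper. The paper disposes of this lemma in one line: it is the five-term exact sequence in low degrees coming from the Hochschild--Serre spectral sequence $H_n(L/[L,L],H_m([L,L]))\Rightarrow H_{n+m}(L)$ applied to the extension $0\to[L,L]\to L\to L/[L,L]\to 0$; since $H_1(L)\to H_1(L/[L,L])$ is an isomorphism, the five-term sequence $H_2(L)\to H_2(L/[L,L])\to [L,L]/[L,[L,L]]\to H_1(L)\to H_1(L/[L,L])\to 0$ truncates to the surjection $\wedge^2(L/[L,L])\overset{\pi}\to[L,L]/[[L,L],L]$, and splicing in $H_2^{ess}(L)=\operatorname{Ker}\psi$ gives the stated sequence. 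You instead verify exactness by hand in the Chevalley--Eilenberg complex: well-definedness and surjectivity of $\pi$, the trivial inclusion $\operatorname{Im}\psi\subseteq\operatorname{Ker}\pi$, and the key step $\operatorname{Ker}\pi\subseteq\operatorname{Im}\psi$ via the corrected cycle $\sum_i x_i\wedge y_i-\sum_j[a_j,b_j]\wedge c_j$, whose correction term dies modulo $[L,L]$ --- this is exactly right (and you correctly use that the differential $\wedge^3\to\wedge^2$ vanishes for the abelian quotient, so $H_2(L/[L,L])\simeq\wedge^2(L/[L,L])$ and boundaries reduce to zero). Your approach buys elementarity and an explicit description of all the maps, at the cost of a page of checking; the paper's buys brevity and functoriality at the cost of invoking the spectral sequence machinery. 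Either proof is acceptable.
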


\begin{proof}
This is just an obvious consequence of a 5-term exact sequence derived
from the Hochschild-Serre spectral sequence 
$H_n(L/[L,L], H_m([L,L])) \Rightarrow H_{n+m}(L)$.
\end{proof}

Further, we need a version of Hopf formula for $H_2^{ess}(L)$.

\begin{lemma}\label{1.2}
Given a presentation $0 \to I \to \free \to L \to 0$ of a Lie algebra $L$, one has

\begin{equation}\tag{1.1}
H_2^{ess}(L) \simeq \frac{\free^3 \cap I}{\free^3 \cap [\free, I]}.
\end{equation}
\end{lemma}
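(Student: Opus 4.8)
The plan is to combine the classical Hopf formula (0.2) with the exact sequence of Lemma \ref{1.1}. Starting from a presentation $0 \to I \to \free \to L \to 0$, we have $H_2(L) \simeq ([\free,\free] \cap I)/[\free,I]$, and I want to identify the kernel of the composite map $\psi\colon H_2(L) \to \wedge^2(L/[L,L])$ in terms of the free Lie algebra. The key observation is that $L/[L,L] \simeq \free/(\free^2 + I)$, and a $2$-cycle represented by an element $w \in \free^2 \cap I$ maps under $\psi$ to its class determined modulo $\free^3 + [\free,I]$; more precisely $\wedge^2(L/[L,L])$ is computed by the presentation $0 \to (\free^2 + I)/\free^3 \to \free/\free^3 \to L/[L,L] \to 0$, whose associated Hopf-type numerator is $(\free^2 \cap (\free^2+I))/\text{(something)} = \free^2/\free^3$ intersected appropriately. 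So the first step is to make precise the commutative square relating the Hopf presentation of $L$ and that of $L/[L,L]$, and to check that $\psi$ is induced by the inclusion $\free^2 \cap I \hookrightarrow \free^2$ followed by projection modulo $\free^3$.

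Concretely, I would argue as follows. An element $w \in \free^2 \cap I$ lies in $\ker\psi$ if and only if its image in $\wedge^2(L/[L,L]) = \free^2/(\free^3 + (\free^2 \cap I)\cdot\text{stuff})$ vanishes. Tracing through the identification $\wedge^2(L/[L,L]) \simeq \free^2/(\free^3 + [\free, \free^2+I] + \dots)$, one finds that $w$ is sent to zero exactly when $w \in \free^3 + [\free,I]$ — intuitively, the ``inessential'' part of a cycle is its leading quadratic term modulo the commutator relations, so killing it forces $w$ into $\free^3$ up to boundaries $[\free,I]$. Hence
\begin{equation}\notag
H_2^{ess}(L) = \ker\psi \simeq \frac{(\free^3 + [\free,I]) \cap I}{[\free,I]}.
\end{equation}
Since $[\free,I] \subseteq I$ already, the numerator is $[\free,I] + (\free^3 \cap I)$, and then the second isomorphism theorem gives
\begin{equation}\notag
\frac{[\free,I] + (\free^3 \cap I)}{[\free,I]} \simeq \frac{\free^3 \cap I}{\free^3 \cap I \cap [\free,I]} = \frac{\free^3 \cap I}{\free^3 \cap [\free,I]},
\end{equation}
which is exactly (1.1). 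The last equality uses $\free^3 \cap I \cap [\free,I] = \free^3 \cap [\free,I]$, valid because $[\free,I] \subseteq I$.

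The main obstacle I anticipate is the first step: verifying carefully that under the Hopf isomorphism (0.2) the map $\psi$ to $\wedge^2(L/[L,L])$ really corresponds to ``take the class of $w$ modulo $\free^3$,'' and in particular that the subgroup of $\free^2 \cap I$ landing in zero is precisely $(\free^3 + [\free,I]) \cap I$ and not something larger. This requires pinning down the naturality of the Hopf formula with respect to the quotient map $L \to L/[L,L]$ — i.e. a commutative diagram of presentations $\free \to \free$ (identity on generators) inducing $L \to L/[L,L]$ — and then chasing the induced map on the numerator subgroups. Once that naturality square is in place, the rest is the elementary isomorphism-theorem manipulation above. I would also need to briefly note that the characteristic hypothesis $p \neq 2$ is what makes $H_2(L/[L,L]) \simeq \wedge^2(L/[L,L])$ hold (an abelian Lie algebra has its second homology equal to the exterior square only away from characteristic $2$), so that Lemma \ref{1.1} applies as stated.
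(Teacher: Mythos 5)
Your argument is correct and is essentially the paper's own proof: both apply the Hopf formula to the presentation $0 \to \free^2 + I \to \free \to L/[L,L] \to 0$ (noting $[\free,\free^2+I] = \free^3 + [\free,I]$), identify $\psi$ by naturality with the induced map $(\free^2 \cap I)/[\free,I] \to \free^2/[\free,\free^2+I]$, and then reduce $\bigl((\free^3+[\free,I]) \cap I\bigr)/[\free,I]$ to $(\free^3 \cap I)/(\free^3 \cap [\free,I])$ via the modular law and the second isomorphism theorem. The only slip is the closing aside: $H_2$ of an abelian Lie algebra equals its exterior square in every characteristic (the Chevalley--Eilenberg differential vanishes on the abelianization), so $p \neq 2$ is not what makes Lemma \ref{1.1} applicable, but this does not affect your argument.
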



\begin{proof}
Since $L/[L,L] \simeq \free/(\free^2 + I)$, the Hopf formula (0.2) being applied to
the algebra $L/[L,L]$ gives 
$H_2(L/[L,L]) \simeq \free^2/[\free, \free^2 + I]$, and

\begin{equation}\notag
Ker\psi = 
Ker \left( \frac{\free^2 \cap I}{[\free, I]} \to 
    \frac{\free^2}{[\free, \free^2 + I]} \right) \simeq
\frac{\free^2 \cap I \cap [\free, \free^2 + I]}{[\free, I]} \simeq
\frac{\free^3 \cap I}{\free^3 \cap [\free, I]}. 
\end{equation}
\end{proof}

Now consider an action of a Lie algebra $L$ on $S^2(L)$ via

\begin{equation}\notag
[z, x \vee y] = [z,x] \vee y + x \vee [z,y].
\end{equation}

\noindent Let $B(L) = S^2(L)/[L, S^2(L)]$ be the space of coinvariants of this action. The
dual $B(L)^*$ is the space of symmetric bilinear invariant forms on $L$.

Let $I, J$ be ideals of $L$. Define $B(I, J)$ to be the space of coinvariants of
the action of $L$ on $I \vee J$. One has a natural embedding $B(I, J) \to B(L)$. The
natural map $L \vee J \to (L/I) \vee ((I + J)/I)$ defines a surjection 
$B(L, J) \to B(L/I, (I + J)/ I)$.

\begin{lemma}\label{1.3}
The short sequence

\begin{equation}\tag{1.2}
0 \to B(L, I \cap J) + B(I,J) \to B(L,J) \to B(L/I,(I+J)/I) \to 0
\end{equation}

\noindent is exact.
\end{lemma}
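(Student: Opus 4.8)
The plan is to prove exactness of (1.2) by unwinding the definitions of the coinvariant spaces and chasing the resulting commutative diagram of vector spaces.

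First I would set up the exact sequence of $L$-modules from which (1.2) should arise. The quotient map $L \to L/I$ together with the inclusion of ideals induces a surjection of $L$-modules $L \vee J \twoheadrightarrow (L/I) \vee ((I+J)/I)$; its kernel, I claim, is exactly $(L \vee (I\cap J)) + (I \vee J)$, the span inside $L \vee J \subseteq S^2(L)$. This is the one genuinely computational point and I expect it to be the main obstacle: one must check that an element of $L \vee J$ mapping to $0$ can be rewritten, modulo $I \vee J$, as a sum of terms $\ell \vee j$ with $j \in I \cap J$. The natural strategy is to pick a $K$-basis of $L$ adapted to the chain $I \cap J \subseteq I, J \subseteq L$ (choosing complements), expand a general element of $L \vee J$ in the induced basis of the symmetric square, and observe that the basis monomials surviving in $(L/I)\vee((I+J)/I)$ are precisely those involving no factor from $I$ and at least... more carefully, those whose $J$-factor avoids $I\cap J$ and whose $L$-factor avoids $I$; the complementary monomials are exactly those landing in $(L\vee(I\cap J)) + (I\vee J)$. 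So we get a short exact sequence of $L$-modules
\begin{equation}\notag
0 \to (L \vee (I\cap J)) + (I \vee J) \to L \vee J \to (L/I)\vee((I+J)/I) \to 0.
\end{equation}

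Next I would apply the right-exact functor of coinvariants $M \mapsto M_L = M/[L,M]$ to this short exact sequence. Right-exactness immediately gives exactness of (1.2) at the middle and right spots, i.e. $B(L,J) \to B(L/I,(I+J)/I) \to 0$ and that the image of the left map is the kernel of the middle map. The only remaining issue is whether the left map is injective, which right-exactness does not guarantee in general. Here I would argue that the image of $(L \vee (I\cap J)) + (I\vee J)$ under coinvariants is, by definition, the subspace $B(L,I\cap J) + B(I,J) \subseteq B(L,J)$ — this is exactly how those symbols were defined just before the lemma (as images of the corresponding coinvariant spaces in $B(L)$, compatibly restricted to $B(L,J)$) — so the left arrow in (1.2) is the inclusion of that subspace and is injective by fiat. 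Thus the content is entirely the identification of the kernel in the module short exact sequence, and the rest is the standard fact that coinvariants is right exact (being a quotient by the submodule $[L,-]$, equivalently $H_0(L,-)$). I would close by remarking that the same basis-adapted argument also yields the embedding $B(I,J) \hookrightarrow B(L)$ and the surjection $B(L,J) \twoheadrightarrow B(L/I,(I+J)/I)$ asserted in the paragraph preceding the lemma, so no extra work is needed there.
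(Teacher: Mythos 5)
Your proposal is correct and follows essentially the same route as the paper: identify the kernel of the module-level surjection $L \vee J \to (L/I)\vee((I+J)/I)$ as $L\vee(I\cap J)+I\vee J$ and then pass to coinvariants, reading $B(L,I\cap J)+B(I,J)$ as the image of that kernel inside $B(L,J)$. The only difference is presentational: you justify the kernel identification with an adapted basis (the paper simply asserts it) and invoke right-exactness of $H_0(L,-)$, where the paper factors directly through $[L,S^2(L)]$.
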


\begin{proof}
Since $Ker (L \vee J \to L/I \vee (I+J)/I) = L \vee (I \cap J) + I \vee J$, the
factorization through $[L, S^2(L)]$ yields

\begin{multline}\notag
Ker(B(L,J) \to B(L/I, (I+J)/I))   \\
= (L \vee (I \cap J) + I \vee J + [L, S^2(L)])/[L, S^2(L)] \simeq 
B(L, I \cap J) + B(I,J).
\end{multline}
\end{proof}

\noindent\textit{Remark}. 
Actually we need the following two cases of this Lemma:

(1) $J = [L,L]$. Since $I \vee [L,L]$ and $[I,L] \vee L$ are congruent modulo $[L, S^2(L)]$
and $[I,L] \subseteq I \cap [L,L]$, then $B(I,[L,L]) \subseteq B(L, I \cap [L, L])$ 
and we get a short exact sequence

\begin{equation}\tag{1.3}
0 \to B(L, I \cap [L,L]) \to B(L,[L,L]) \to B(L/I, [L/I, L/I]) \to 0.
\end{equation}

(2) $I = [L,L]$ and $J = L$. Then taking into account that for an abelian Lie
algebra $M$, $B(M) \simeq S^2(M)$, the short exact sequence (1.2) becomes

\begin{equation}\tag{1.4}
0 \to B(L,[L,L]) \to B(L) \to S^2(L/[L,L]) \to 0.
\end{equation}

\section{Presentation of $L \otimes A$}

In this section starting from a presentation of $L$ we construct a presentation 
of $L \otimes A$.

Let $0 \to I \to \free(X) \overset{p}\to L \to 0$ be a presentation of the Lie algebra $L$.
Tensoring by $A$, we get a short exact sequence

\begin{equation}\tag{2.1}
0 \to I \otimes A \to \free(X) \otimes A \overset{p\otimes 1}\to L \otimes A \to 0.
\end{equation}

Let $X(A)$ be a set of symbols $x(a), x \in X, a \in A$. Define a homomorphism
$\phi: \free(X(A)) \to \free(X) \otimes A$ by

\begin{equation}\notag
\phi: u(x_1(a_1), \dots, x_n(a_n)) \mapsto u(x_1, \dots, x_n) \otimes a_1 \dots a_n.
\end{equation}

\noindent 
Obviously this mapping is surjective, and taking into account (2.1), gives rise
to the following exact sequence:

\begin{equation}\tag{2.2}
0 \to \phi^{-1}(I \otimes A) \to \free(X(A))
  \overset{(p\otimes 1) \circ \phi}\longrightarrow L \otimes A \to 0
\end{equation}

\noindent which gives the presentation of $L \otimes A$.

In order to determine the structure of $\phi^{-1}(I \otimes A)$, let us introduce one 
notation. For each homogeneous element $u = u(x_1, \dots, x_n)$ of $\free(X)$, define 
$u(a)$ to be $u(x_1(a), x_2(1), \dots, x_n(1))$. Now having an arbitrary element 
$u \in \free(X)$, define $u(a)$ as $u_1(a) + \dots + u_k(a)$, where 
$u = u_1 + \dots + u_k$ is decomposition of $u$ into the sum of homogeneous components.

\begin{lemma}\label{2.1}
\hfill
\begin{enumerate}
\item
$Ker\phi$ is linearly generated by elements of the form
\begin{equation}\tag{2.3}
\sum_j u(x_{i_1}(a_1^j), \dots, x_{i_n}(a_n^j))
\end{equation}

\noindent where $u(x_{i_1}, \dots, x_{i_n})$ is homogeneous element of $\free(X)$ and
\break
$\sum_j a_1^j \dots a_n^j = 0$.

\item
$\phi^{-1}(I \otimes A)$ is linearly generated modulo $Ker\phi$ by elements of the form
$u(a)$, where $u \in I$.
\end{enumerate}
\end{lemma}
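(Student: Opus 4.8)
The plan is to analyze the homomorphism $\phi: \free(X(A)) \to \free(X) \otimes A$ by exploiting its compatibility with the grading of the free Lie algebra by the generators. Observe that $\free(X(A))$ is graded by total degree in the generators $x(a)$, and $\free(X) \otimes A$ is graded by degree in the $x$'s; $\phi$ respects this grading, so it suffices to work in each homogeneous component separately. In degree $n$, the component of $\free(X)$ has a basis of homogeneous Lie monomials $u(x_{i_1},\dots,x_{i_n})$, and the corresponding component of $\free(X)\otimes A$ is spanned by $u(x_{i_1},\dots,x_{i_n})\otimes a$. On the source side, the degree-$n$ component of $\free(X(A))$ is spanned by monomials $u(x_{i_1}(b_1),\dots,x_{i_n}(b_n))$; since $u$ is multilinear in its slots, such an element maps under $\phi$ to $u(x_{i_1},\dots,x_{i_n})\otimes b_1\cdots b_n$, with the same underlying Lie word $u$ on the $x$'s.

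For part (1): fix a homogeneous Lie word $u$ of degree $n$ and a tuple of indices $(i_1,\dots,i_n)$. The subspace of $\free(X(A))_n$ spanned by the $u(x_{i_1}(b_1),\dots,x_{i_n}(b_n))$ over all choices of $b_1,\dots,b_n \in A$ is a homomorphic image of $A^{\otimes n}$ (via multilinearity of the slots; one must be slightly careful about Lie relations among distinct words, but after passing to a basis of $\free(X)_n$ these images sit in distinct summands of $\free(X)\otimes A$ and so do not interfere). Restricted to this piece, $\phi$ is, up to this identification, the multiplication map $A^{\otimes n}\to A$, $b_1\otimes\cdots\otimes b_n \mapsto b_1\cdots b_n$, tensored with the one-dimensional span of $u(x_{i_1},\dots,x_{i_n})$. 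Hence the kernel in this piece is exactly the kernel of the multiplication map, which is linearly spanned by the elementary tensors $\sum_j b_1^j\otimes\cdots\otimes b_n^j$ with $\sum_j b_1^j\cdots b_n^j = 0$; translating back gives precisely the stated generators (2.3). Summing over all words $u$ and all index tuples, and over all degrees $n$, yields $\Ker\phi$.

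For part (2): given $\xi \in \phi^{-1}(I\otimes A)$, write $w = \phi(\xi) \in I\otimes A$, say $w = \sum_k v_k \otimes a_k$ with $v_k \in I$; we may take the $v_k$ homogeneous by decomposing. Now $v_k(a_k) \in \free(X(A))$ maps under $\phi$ to $v_k \otimes a_k$ (using the unit of $A$ to pad the remaining slots), so $\xi - \sum_k v_k(a_k)$ lies in $\Ker\phi$, proving that the $u(a)$ with $u\in I$ generate $\phi^{-1}(I\otimes A)$ modulo $\Ker\phi$. For a general (non-homogeneous) $u\in I$ one uses the definition of $u(a)$ as the sum of the $u_i(a)$ over homogeneous components, which is consistent since $I$ is an ideal and hence a graded-compatible subspace only after decomposition — but the definition was rigged precisely so that $\phi(u(a)) = u\otimes a$ holds for all $u$. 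The main obstacle is the bookkeeping in part (1): one must verify that Lie-algebra relations (Jacobi, anticommutativity) among the words $u(x_{i_1}(b_1),\dots)$ are already accounted for by the corresponding relations downstairs, so that no extra kernel elements appear beyond those coming from the multiplication map on each fixed word; this is handled cleanly by fixing a Hall (or Lyndon) basis of $\free(X)$ in each degree before comparing.
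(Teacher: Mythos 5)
Your part (2) is fine and is essentially the paper's argument. Part (1), however, rests on a false identification. You claim that, for a fixed word $u$ and fixed indices, the span of the elements $u(x_{i_1}(b_1),\dots,x_{i_n}(b_n))$ inside $\free(X(A))$ is a homomorphic image of $A^{\otimes n}$ ``via multilinearity of the slots''. But $X(A)$ is a \emph{set} of independent symbols: in the free Lie algebra $\free(X(A))$ there are no relations $x(a+a')=x(a)+x(a')$ or $x(\lambda a)=\lambda x(a)$, so the assignment $(b_1,\dots,b_n)\mapsto u(x_{i_1}(b_1),\dots,x_{i_n}(b_n))$ is not multilinear and induces no map from $A^{\otimes n}$ at all. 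Already in degree one, the span of $\{x(b):b\in A\}$ is the free vector space on the set $A$ (its dimension is the cardinality of $A$, not $\dim_K A$), and $Ker\,\phi$ there consists of all formal combinations $\sum_c \mu_c\,x(c)$ with $\sum_c \mu_c c=0$; it contains, for instance, $x(a+a')-x(a)-x(a')$, an element your identification silently declares to be zero before anything is computed. Thus the heart of the lemma --- that the only relations in $Ker\,\phi$ are those recorded by the multiplication maps $A^{\otimes n}\to A$, i.e.\ by elements of type (2.3) --- is exactly what your identification presupposes; the argument is circular at this point. (The Jacobi/Hall-basis bookkeeping you flag at the end is not the real difficulty, and a Hall basis of $\free(X)$ does not give the direct-sum decomposition of $\free(X(A))$ you need: e.g.\ $[x_i(a),x_i(b)]$ is a nonzero monomial upstairs whose underlying word $[x_i,x_i]$ vanishes downstairs, and monomials with the same underlying word but permuted arguments are not proportional upstairs.)

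The paper's proof does the work you are missing: every element of $\free(X(A))$ is first reduced, modulo explicitly exhibited elements of type (2.3), to a normal form $\sum_i\sum_j u_i(a_{ij})$ with the $u_i$ linearly independent homogeneous elements of $\free(X)$ (each monomial $u(x_{i_1}(b_1),\dots,x_{i_n}(b_n))$ is congruent to $u(b_1\cdots b_n)$ modulo a visible (2.3)-element); applying $\phi$ to a kernel element in this normal form gives $\sum_i u_i\otimes\bigl(\sum_j a_{ij}\bigr)=0$, hence $\sum_j a_{ij}=0$ for each $i$, so the normal-form part is itself a sum of elements of type (2.3). If you want to salvage your plan, this reduction step --- showing that the slots become multilinear only \emph{after} factoring out the span of the (2.3)-elements --- is precisely what you must supply; it cannot be conjured by fixing bases.
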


\begin{proof}
(1) Evidently each element of $\free(X(A))$ may be expressed as a sum
of elements of the form $u(a)$ and elements of the form (2.3), the latter lying in 
$Ker\phi$. To prove that they exhaust all $Ker\phi$, take a nonzero element
$\sum_i \sum_j u_i(a_{ij})$ belonging to $Ker\phi$, 
where $u_i$'s are linearly independent, and
obtain $\sum_i \sum_j u_i \otimes a_{ij} = 0$, which implies $\sum_j a_{ij} = 0$ 
for each $i$.

(2) The factorspace $\phi^{-1} (I \otimes A)/Ker\phi$, consisting from cosets 
$u(a) + Ker\phi$, maps onto $I \otimes A$, whence the conclusion.
\end{proof}

We also need the following technical result.


\begin{lemma}\label{2.2}
For any $u, v, w \in \free(X)$ and $a, b, c \in A$, the elements

\begin{equation}\notag
[[w, u](a), v(b)] - [[w, u](b), v(a)] + [[w, v](a), u(b)) - [[w, v](b), u(a)]
\end{equation}

\noindent and

\begin{align}
  & [[u, v](ab), w(c)] - [[u, v)(c), w(ab)]   \notag \\
+ & [[u, v](ca), w(b)] - [[u, v](b), w(ca)]   \notag \\
+ & [[u, v](bc), w(a)] - [[u, v](a), w(bc)]   \notag
\end{align}

\noindent belong to $[\free(X(A)), Ker\phi]$.
\end{lemma}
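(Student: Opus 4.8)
The plan is to exhibit each of the two displayed elements explicitly as a bracket $[\xi,\eta]$ with $\xi\in\free(X(A))$ and $\eta\in Ker\phi$, using part (1) of Lemma~\ref{2.1} to certify membership in $Ker\phi$. The basic mechanism is the following identity in $\free(X)\otimes A$: for homogeneous $u,v\in\free(X)$ and $a,b\in A$, one has $[u(a),v(b)]+[u(b),v(a)] = \phi\big([u(x(a)),v(y(b))] + [u(x(b)),v(y(a))]\big)$, where the image depends on $a,b$ only through $ab$; hence, if $\sum_j a^jb^j = 0$, the corresponding combination of the arguments lies in $Ker\phi$ by Lemma~\ref{2.1}(1). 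The same remark applies with one of the slots replaced by a deeper nesting, e.g. $[w,u](a)$, provided we track carefully which generator carries the scalar.

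For the first element, I would set $u' = [w,u]$, $v' = v$ (both homogeneous, after reducing to homogeneous components, which is harmless since everything is multilinear in the scalars and linear in $u,v,w$) and consider the element
\begin{equation}\notag
\eta = [u'(x(a)),v'(y(b))] - [u'(x(b)),v'(y(a))] + [v'(x(a)),u'(y(b))] - [v'(x(b)),u'(y(a))]
\end{equation}
of $\free(X(A))$ suitably interpreted; applying $\phi$ gives $[u'(a),v'(b)] - [u'(b),v'(a)] + [v'(a),u'(b)] - [v'(b),u'(a)]$, and since $[u'(a),v'(b)]$ and $-[v'(b),u'(a)]$ cancel in $\free(X)\otimes A$ (antisymmetry, same scalar $ab$), and likewise the other pair, this image is $0$, so $\eta\in Ker\phi$. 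The point is then to recognize the target element of the Lemma as $[\text{something}(1),\eta]$ or a sum of such brackets — concretely, one writes $[[w,u](a),v(b)]$ as a bracket in $\free(X(A))$ of a generator (carrying scalar $1$) against a piece of $\eta$, and collects terms. The combinatorics here is the Jacobi/Leibniz bookkeeping needed to move the outer bracket with $w$ inside; I expect to use the derivation property $z\big(u'(a)\big)$-type expansions together with the fact that shifting a scalar $1$ between generators is invisible under $\phi$.

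For the second element the strategy is identical but the relevant $Ker\phi$-certificate comes from the three-term relation: with $u' = [u,v]$ homogeneous and $w$ a generator slot,
\begin{equation}\notag
\eta' = \sum_{\mathrm{cyc}}\Big([u'(x(ab)),w(y(c))] - [u'(x(c)),w(y(ab))]\Big)
\end{equation}
maps under $\phi$ to $\sum_{\mathrm{cyc}}\big([u'(ab),w(c)] - [u'(c),w(ab)]\big)$, which again telescopes to $0$ in $\free(X)\otimes A$ because each $[u'(ab),w(c)]$ is cancelled by the $-[u'(c),w(ba)] = -[u'(c),w(ca')]$ term from the appropriate cyclic summand (here one uses commutativity of $A$: $ab=ba$, $ca=ac$, etc.), so $\eta'\in Ker\phi$. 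Then the element in the statement is realized as $[w'(1),\eta']$ or a combination, for an appropriate $w'$, modulo the same manipulations as before.

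The main obstacle is bookkeeping rather than conceptual: making the substitutions of which generator carries which scalar fully consistent, and verifying that after applying $\phi$ the ``extra'' terms introduced when one writes the target as $[\text{generator},\eta]$ either vanish or are again of the form (2.3). In particular one must be careful that $u'(a)$ for non-homogeneous $u'$ is \emph{defined} as the sum over homogeneous components (as set up before Lemma~\ref{2.1}), so the whole argument should first be carried out for homogeneous $u,v,w$ and then extended by linearity; and one must check that the Leibniz expansion of a bracket like $[z, u'(x(a))]$ in $\free(X(A))$ only ever redistributes the scalar $a$ among generators already present, never creating a term outside $[\free(X(A)),Ker\phi]$. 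Once the homogeneous case is pinned down, summing over components and over the generating set $X$ gives the general statement.
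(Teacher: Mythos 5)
There is a genuine gap: the Lemma asserts membership in $[\free(X(A)), Ker\phi]$, which is a much smaller subspace of the ideal $Ker\phi$, but both of your explicit ``certificates'' only establish $Ker\phi$-membership and neither produces the target element as a bracket. For the first element, your $\eta$ (with $u'=[w,u]$, $v'=v$) collapses by antisymmetry to $2\bigl([[w,u](a),v(b)]-[[w,u](b),v(a)]\bigr)$, i.e.\ to twice the first half of the target; the target is not a bracket of anything against this $\eta$, and the step you defer as ``Jacobi/Leibniz bookkeeping'' is exactly the content of the Lemma. For the second element the argument is circular: your $\eta'$ \emph{is} the target element itself, so observing $\eta'\in Ker\phi$ proves nothing, and ``the element in the statement is realized as $[w'(1),\eta']$'' cannot be right, since that bracket is a different element. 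Note also that working with the $(u,v)$-half alone cannot explain why the statement symmetrizes over $u\leftrightarrow v$ (respectively, sums cyclically over $a,b,c$): the two-term combination $[[w,u](a),v(b)]-[[w,u](b),v(a)]$ lies in $Ker\phi$ but is not claimed to lie in $[\free(X(A)),Ker\phi]$.

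What is actually needed is short but must be written out, and it is what the paper does. Modulo $[\free(X(A)),Ker\phi]$ one may replace any bracket factor by another element with the same $\phi$-image, since the difference lies in the ideal $Ker\phi$. Replacing $[w,u](a)$ by $[w(1),u(a)]$, $[w,u](b)$ by $[w(1),u(b)]$, $[w,v](a)$ by $[w(1),v(a)]$ and $[w,v](b)$ by $[w(1),v(b)]$ turns the first element, modulo $[\free(X(A)),Ker\phi]$, into
\begin{equation}\notag
[[w(1),u(a)],v(b)]-[[w(1),u(b)],v(a)]+[[w(1),v(a)],u(b)]-[[w(1),v(b)],u(a)],
\end{equation}
and the Jacobi identity collapses this exactly to $[w(1),\,[u(a),v(b)]-[u(b),v(a)]]$, whose second factor lies in $Ker\phi$ by Lemma 2.1(1) and the commutativity $ab=ba$; here the $u\leftrightarrow v$ symmetrization is precisely what makes the Jacobi step close up into a single bracket against $w(1)$. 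The second element requires the analogous two ingredients (image-preserving replacements of the factors carrying products of scalars, followed by Jacobi, with the cyclic sum providing the cancellation), and this computation, too, is the proof rather than bookkeeping; it is absent from your proposal.
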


\begin{proof}
Consider the first case only, the second one is analogous. We have modulo
$[\free(X(A)), Ker\phi]$:

\begin{multline}\notag
[[w, u](a), v(b)] - [[w, u](b), v(a)] + [[w, v)(a), u(b)] - [[w, v](b), u(a)]  \\
\equiv  [[w(1), u(a)], v(b)] + [[v(b), w(1)], u(a)]                            \\
+       [[w(1), v(a)], u(b)] + [[u(b), w(1)], v(a)]                            \\
\equiv -[[u(a), v(b)], w(1)] + [[u(b), v(a)], w(1)] \equiv 0
\end{multline}
\end{proof}

\section{The second homology of $L \otimes A$}

The aim of this section is to prove Theorem 0.1.

Consider the following commutative diagram with exact rows and columns,
where $\phi^{-1}$ stands for $\phi^{-1}(I \otimes A)$
(we will use this notation in some places further):

{\footnotesize

$$
\minCDarrowwidth 9pt
\begin{CD}
   @. 0    @.       0             @.                                            \\
@.      @VVV        @VVV       @.                                               \\
0     @>>> \free^3(X(A)) \cap [\free(X(A)), \phi^{-1}] \cap Ker\phi @>>> 
\free^3(X(A)) \cap Ker\phi     @.                                               \\
   @.   @VVV        @VVV          @.                                            \\
0     @>>> \free^3(X(A)) \cap [\free(X(A)), \phi^{-1}] @>>> 
\free^3(X(A)) \cap \phi^{-1} @>>> H_2^{ess}(L \otimes A)  @>>> 0                \\
   @.   @V{\phi}VV  @V{\phi}VV    @.                                            \\
0     @>>>   (\free^3(X) \otimes A) \cap [\free(X) \otimes A, I \otimes A] @>>>
(\free^3(X) \otimes A) \cap (I \otimes A)    @.                                 \\
   @.   @VVV        @VVV          @.                                            \\
@.    0    @.       0          @.
\end{CD}$$

}


The middle row follows from the Lemma 1.2 applied to the presentation (2.2).

Completing this diagram to the third column, we get a short exact sequence

\begin{multline}\tag{3.1}
0 \to 
\frac{\free^3(X(A)) \cap Ker\phi}
{\free^3(X(A)) \cap [\free(X(A)), \phi^{-1}(I \otimes A)] \cap Ker\phi} 
\to H_2^{ess}(L \otimes A)           \\
\to \frac{\free^3(X) \cap I}{\free^3(X) \cap [\free(X), I]} \otimes A \to 0.
\end{multline}

According to Lemma 1.2, the right term here is nothing but
$H_2^{ess}(L) \otimes A$. Let us compute the left term.

Let $\mathcal F(Y)$ be a free skewcommutative algebra on an alphabet $Y$ with 
nonassociative product denoted by $[ \cdot \,, \cdot]$. Define a mapping 
$\alpha: \mathcal F^2(X(A)) \to S^2(\mathcal F(X)) \otimes (A \wedge A)$ by

\begin{multline}\tag{3.2}
\alpha: [u(x_1(a_1), \dots, x_n(a_n)), v(x_1(b_1), \dots, x_m(b_m))] \mapsto  \\
        (u(x_1, \dots, x_n) \vee v(x_1, \dots, x_m)) \otimes (a_1 \dots a_n \wedge b_1 \dots b_m).
\end{multline}

\noindent (recall that $\mathcal F^2(Y)$ is just $[\mathcal F(Y), \mathcal F(Y)]$).

It is easy to see that this mapping is well defined and surjective.

Let $J(Y)$ be an ideal of $\mathcal F(Y)$ generated by elements of the form 
$[[u, v], w] + [[w, u], v] + [[v, w], u], u, v, w \in \mathcal F(Y)$ such that
$\mathcal F(Y)/J(Y) \simeq \free(Y)$.

\begin{lemma}\label{3.1}
\begin{multline}\notag
\alpha(J(X(A))) = 
(J(X) \vee \mathcal F(X) + [\mathcal F(X), S^2(\mathcal F(X))]) \otimes (A \wedge A)  \\
+ (\mathcal F^2(X) \vee \mathcal F(X)) \otimes T(A).
\end{multline}
\end{lemma}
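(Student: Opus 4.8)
The plan is to compute $\alpha(J(X(A)))$ directly by tracking what happens to the Jacobi-type generators of $J(X(A))$ under the map $\alpha$ defined in (3.2). A typical generator of $J(X(A))$ has the form $[[P,Q],R] + [[R,P],Q] + [[Q,R],P]$ where $P,Q,R \in \mathcal F(X(A))$, but since $\alpha$ is linear it suffices to take $P = u(a)$-type ``monomials'', i.e. $P$ of the form $u(x_{i_1}(a_1),\dots,x_{i_n}(a_n))$ with product of coefficients $a = a_1\cdots a_n$, and similarly $Q \leftrightarrow (v,b)$, $R \leftrightarrow (w,c)$. One must be careful: such a generator of $J$ lies in $\mathcal F^2(X(A))$ only when at least two bracketings occur, which is automatic here. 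First I would split into two cases according to the ``depth'' at which the Jacobi relation sits.

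\textbf{Case analysis.} In the first case the innermost bracket $[P,Q]$ or the relation is applied so that all three of $u,v,w$ carry at least one generator-slot and the resulting image is a single symmetric product; here one computes
\begin{align}\notag
\alpha\bigl([[u(a),v(b)],w(c)]\bigr) &= ([[u,v],w] \vee w')\text{-type terms} \otimes (ab \wedge c),
\end{align}
and cyclically summing the three terms produces, on the $S^2(\mathcal F(X))$-factor, the Jacobiator $([[u,v],w]+[[w,u],v]+[[v,w],u]) \vee (\text{something})$ when the coefficients happen to coincide, yielding the summand $J(X) \vee \mathcal F(X)$ tensored with $A \wedge A$; when instead the coefficient-wedge factors vary cyclically (the $ab\wedge c$, $ca \wedge b$, $bc\wedge a$ pattern), the $\mathcal F(X)$-factor is forced into $\mathcal F^2(X)$ and the wedge-part collapses precisely into $T(A)$, giving the summand $(\mathcal F^2(X) \vee \mathcal F(X)) \otimes T(A)$. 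The term $[\mathcal F(X), S^2(\mathcal F(X))] \otimes (A\wedge A)$ arises from Jacobi relations nested one level deeper, where the outermost slot is a ``spectator'' $w(1)$: there the image is $[w, u \vee v] \otimes (a \wedge b)$-type expressions, i.e. exactly the defining generators of $[\mathcal F(X), S^2(\mathcal F(X))]$. This is essentially the computation already rehearsed in Lemma~2.2, which identifies the relevant combinations as living in $[\free(X(A)), Ker\phi]$ — the $\alpha$-image statement is the ``skewcommutative shadow'' of that.

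\textbf{Both inclusions.} For ``$\subseteq$'' I would verify that every generator of $J(X(A))$ maps into the asserted right-hand side by the case check above, using multilinearity to reduce to monomial arguments and using skewcommutativity of $\mathcal F$ and of $A \wedge A$ to normalize. For ``$\supseteq$'' I would exhibit, for each of the three summands on the right, an explicit preimage: a generator $J(X)$-element $\vee$-multiplied by a monomial and tensored appropriately lifts a generic element of $J(X)\vee\mathcal F(X) \otimes (A\wedge A)$; Lemma~2.2's first identity (read through $\alpha$) supplies preimages for $[\mathcal F(X),S^2(\mathcal F(X))]\otimes(A\wedge A)$; and Lemma~2.2's second, cyclic identity supplies preimages for $(\mathcal F^2(X)\vee\mathcal F(X))\otimes T(A)$. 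Here one uses surjectivity of $\alpha$ (already noted) together with the observation that $\alpha$ restricted to $\mathcal F^2$ still surjects onto the $\mathcal F^2$-graded piece.

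\textbf{Main obstacle.} The routine part is the bookkeeping; the genuinely delicate point is keeping the $S^2(\mathcal F(X))$-component and the $A\wedge A$-component decoupled correctly, because $\alpha$ sends a single bracket to a \emph{symmetric} product while the source is \emph{skew}, so signs and the collapse $a \wedge a = 0$ interact nontrivially with the Jacobi cycling. In particular one must check that the ``mixed'' Jacobi relations — where the cyclic permutation acts simultaneously on the Lie-monomial slots and on the coefficients — do not produce anything outside $T(A)$ and, conversely, generate all of $T(A)$; this is where the precise shape of $T(A) = \langle ab\wedge c + ca \wedge b + bc\wedge a\rangle$ is pinned down. I expect that once the monomial reduction is set up, each of these checks is a short explicit computation essentially identical to the displayed congruences in the proof of Lemma~2.2.
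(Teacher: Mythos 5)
Your overall strategy is the same as the paper's: pass to images of the generators of the ideal $J(X(A))$, observe that everything except the ``bare'' Jacobi relators lands in $(J(X)\vee\mathcal F(X))\otimes(A\wedge A)$, and extract the remaining two summands by specializing coefficients to $1$. But two of your concrete claims are false as stated, and what is needed to repair them is exactly the part of the argument you skip. First, the relator with spectator $w(1)$ does not map to ``exactly the defining generators of $[\mathcal F(X),S^2(\mathcal F(X))]$'': its image is
\begin{equation}\notag
([u,v]\vee w)\otimes(ab\wedge 1)\;+\;\bigl([w,u]\vee v+u\vee[w,v]\bigr)\otimes(a\wedge b),
\end{equation}
with an extra term in $(\mathcal F^2(X)\vee\mathcal F(X))\otimes(1\wedge A)$. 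The paper disposes of it by first setting \emph{two} coefficients equal to $1$, obtaining $([u,v]\vee w+[w,u]\vee v-[v,w]\vee u)\otimes(1\wedge a)$, and then antisymmetrizing in $u,v$ (this is where $p\neq 2$ enters) to conclude $(\mathcal F^2(X)\vee\mathcal F(X))\otimes(1\wedge A)\subset\alpha(J(X(A)))$; only after that does the one-coefficient specialization yield $([w,u]\vee v+u\vee[w,v])\otimes(A\wedge A)$. Second, the generic cyclic image $([u,v]\vee w)\otimes(ab\wedge c)+([w,u]\vee v)\otimes(ca\wedge b)+([v,w]\vee u)\otimes(bc\wedge a)$ is \emph{not} of the form (single element of $\mathcal F^2\vee\mathcal F$)$\,\otimes\,$(element of $T(A)$), since its three $S^2(\mathcal F(X))$-components differ; it collapses to $([u,v]\vee w)\otimes(ab\wedge c+ca\wedge b+bc\wedge a)$ only after reducing modulo the relations $[w,u]\vee v+u\vee[w,v]$ just established. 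So the two steps must be done in this order; this is not mere bookkeeping but the actual content of the lemma.

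A smaller point: your appeal to Lemma 2.2 for the inclusion $\supseteq$ is misplaced. That lemma asserts membership in $[\free(X(A)),Ker\phi]$ inside the quotient $\free(X(A))=\mathcal F(X(A))/J(X(A))$, so it cannot literally supply preimages in $J(X(A))$ under $\alpha$ (the resemblance of the computations is genuine, but it runs the other way: Lemma 2.2 is proved \emph{using} the Jacobi identity, i.e.\ the relations in $J$). The correct and easy route is the one implicit above: each of the three right-hand summands is spanned by $\alpha$-images of explicit elements of $J(X(A))$ --- Jacobi relators with coefficients chosen among $1,a,b,c$, and, for the summand $(J(X)\vee\mathcal F(X))\otimes(A\wedge A)$, brackets of such relators with further monomials --- so once the specializations are carried out in the right order, $\supseteq$ comes for free.
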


\begin{proof}
Writing the generic element in $J(X(A))$, it is easy to see, by considering graded degree,
that every element in $\alpha(J(X(A)))$ can be written as a sum
of an element lying in $(J(X) \vee \mathcal F(X)) \otimes (A \wedge A)$ and an element of the form

\begin{equation}\tag{3.3}
([u, v] \vee w) \otimes (ab \wedge c) + 
([w, u] \vee v) \otimes (ca \wedge b) + 
([v, w] \vee u) \otimes (bc \wedge a)
\end{equation}

\noindent for certain $u, v, w \in \mathcal F(X)$ and $a, b, c \in A$.

Substituting in (3.3) $b = c = 1$, we get an element

\begin{equation}\notag
([u,v] \vee w + [w,u] \vee v - [v,w] \vee u) \otimes (1 \wedge a).
\end{equation}

\noindent Now permuting the letters $u, v$ in the last expression, one easily get

\begin{equation}\notag
(\mathcal F^2(X) \vee \mathcal F(X)) \otimes (1 \wedge A) \subset \alpha(J(A(X))).
\end{equation}


\noindent Substituting in (3.3) $c = 1$ and taking into account the last relation, we get

\begin{equation}\tag{3.4}
([w, u] \vee v + u \vee [w, v]) \otimes (A \wedge A) \subset \alpha(J(A(X))).
\end{equation}

\noindent Any element in (3.3) is congruent modulo (3.4) to an element of the form
\begin{equation}\notag
(\mathcal F^2(X) \vee \mathcal F(X)) \otimes (ab \wedge c + ca \wedge b + bc \wedge a)
\end{equation}

\noindent proving the Lemma.
\end{proof}

Now factoring the surjection $\alpha$ through $J(X(A))$ and using Lemma 3.1, we
get a mapping
\begin{equation}\notag
\overline\alpha: \free^2(X(A)) \longrightarrow 
                 B(\free(X)) \otimes HC_1(A) + (KX \vee KX) \otimes (A \wedge A),
\end{equation}

\noindent ($KX$ denotes the space of linear terms in $\mathcal F(X)$ such that 
$\mathcal F(X) = KX + \mathcal F^2(X)$), which being restricted to $\free^3(X(A))$, 
gives rise to the surjection

\begin{equation}\notag
\overline\alpha: \free^3(X(A)) \longrightarrow B(\free(X), \free^2(X)) \otimes HC_1(A),
\end{equation}

\noindent where $HC_1(A) = (A \wedge A)/T(A)$ is a first order cyclic homology of $A$.

Further, the restriction of the mapping $\phi$ defined in \S 2 to $\free^3(X(A))$ leads to
a surjection $\phi: \free^3(X(A)) \to \free^3(X) \otimes A$.

\begin{lemma}\label{3.2}
$\overline\alpha(\free^3(X(A)) \cap Ker\phi) = \overline\alpha(\free^3(X(A)))$.
\end{lemma}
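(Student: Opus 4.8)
The plan is to show that every element of $\overline\alpha(\free^3(X(A)))$ can already be realized as the $\overline\alpha$-image of an element of $\free^3(X(A))$ that lies in $Ker\phi$; since the reverse inclusion $\overline\alpha(\free^3(X(A))\cap Ker\phi)\subseteq\overline\alpha(\free^3(X(A)))$ is trivial, this gives the asserted equality. By the description of $\overline\alpha$ obtained above, the target is $B(\free(X),\free^2(X))\otimes HC_1(A)$, so it suffices to hit, modulo $T(A)$, an arbitrary generator $(u\vee v)\otimes(a\wedge b)$ with $u,v\in\free(X)$, at least one of them in $\free^2(X)$ (so that the product lands in $\free^3(X(A))$), using a cycle with zero $\phi$-image.

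First I would write down the obvious preimage. For homogeneous $u(x_{i_1},\dots,x_{i_n})$ and $v(x_{j_1},\dots,x_{j_m})$, the bracket $[\,u(x_{i_1}(a),x_{i_2}(1),\dots),\,v(x_{j_1}(b),x_{j_2}(1),\dots)\,]$ maps under $\alpha$ to $(u\vee v)\otimes(a\wedge b)$ and under $\phi$ to $[u,v]\otimes ab$. To kill the $\phi$-image while preserving the $\overline\alpha$-image, I would correct it by a term built from $x(ab)$ in the first slot versus $x(1)$ elsewhere: the element
\[
  [\,u(x_{i_1}(a),x_{i_2}(1),\dots),\,v(x_{j_1}(b),x_{j_2}(1),\dots)\,]
  \;-\;
  [\,u(x_{i_1}(ab),x_{i_2}(1),\dots),\,v(x_{j_1}(1),x_{j_2}(1),\dots)\,]
\]
has $\phi$-image $[u,v]\otimes ab-[u,v]\otimes ab=0$, hence lies in $Ker\phi$, while its $\alpha$-image is $(u\vee v)\otimes(a\wedge b)-(u\vee v)\otimes(ab\wedge 1)$. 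Modulo $T(A)$ (take $c=1$ in the defining relation $ab\wedge c+ca\wedge b+bc\wedge a$, giving $ab\wedge 1 + a\wedge b + b\wedge a\equiv 0$, i.e. $ab\wedge 1\equiv 0$ in $HC_1(A)$), the second summand dies, so $\overline\alpha$ of this $Ker\phi$-element is exactly $(u\vee v)\otimes(a\wedge b)$.

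Finally I would check that the corrected element indeed lies in $\free^3(X(A))$ — which holds because $[u,v]\in\free^3(X)$ whenever one of $u,v$ lies in $\free^2(X)$, and the construction respects this filtration — and that such elements span all of $\free^3(X(A))$ modulo $Ker\phi$, which is precisely part (2) of Lemma~2.1 together with surjectivity of $\overline\alpha$. The only mild subtlety, and the point I expect to be the main obstacle, is the bookkeeping of where the extra factor $ab$ gets inserted: one must verify that the choice of slot (here the first letter of $u$) does not affect the $\overline\alpha$-image, i.e. that different insertions differ by elements of $[\mathcal F(X),S^2(\mathcal F(X))]\otimes(A\wedge A)$ plus $(\mathcal F^2(X)\vee\mathcal F(X))\otimes T(A)$, which is exactly the content of Lemma~3.1 and is handled by the same Jacobi-type manipulations used there. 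With that, linearity extends the construction from homogeneous generators to all of $\free^3(X(A))$, completing the proof.
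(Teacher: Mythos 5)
Your proposal is correct, and its skeleton is the same as the paper's -- reduce to spanning elements of the form $[u(a),v(b)]$ and then replace each by an element of $Ker\phi$ with the same $\overline\alpha$-image -- but the replacement itself is genuinely different. The paper antisymmetrizes in the $A$-decorations: $[u(a),v(b)]-[u(b),v(a)]$ lies in $Ker\phi$ because $A$ is commutative, and its $\overline\alpha$-image is $2\,\overline\alpha([u(a),v(b)])$, whence the factor $\frac{1}{2}$ and the use of $p\ne 2$. You instead subtract $[u(ab),v(1)]$, which has the same $\phi$-image $[u,v]\otimes ab$, and kill the error term $(u\vee v)\otimes(ab\wedge 1)$ by the relation $ab\wedge 1\in T(A)$ (set $c=1$ in the defining expression). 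Your variant needs neither the division by $2$ nor the commutativity of $A$ at this step, but it does require the target to be already reduced modulo $T(A)$, i.e.\ it works only after passing to $HC_1(A)$ -- which is legitimate here, since $\overline\alpha$ restricted to $\free^3(X(A))$ lands in $B(\free(X),\free^2(X))\otimes HC_1(A)$, and you may indeed use that surjectivity (stated just before the Lemma) as your reduction to generators, in place of the paper's direct appeal to Lemma 2.1; the paper's antisymmetrization, by contrast, already works at the level of $A\wedge A$. Two small remarks: your closing worry about the choice of slot for the inserted product is vacuous, because by the very definition (3.2) the map $\alpha$ depends only on the product of the decorations within each bracket factor, so no Jacobi-type manipulation from Lemma 3.1 is needed there; and your claim that $[u(a),v(b)]\in\free^3(X(A))$ when only one of $u,v$ lies in $\free^2(X)$ implicitly reads $\free^3$ as $[\free,[\free,\free]]$, which is exactly the reading the paper's own Lemma 1.2 requires, so this is consistent rather than a gap.
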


\begin{proof}
The Lemma follows immediately from Lemma 2.1 and equality

\begin{equation}\notag
\overline\alpha [u(a), v(b)] = \frac 12 \overline\alpha ([u(a), v(b)] - [u(b), v(a)]),
\end{equation}

\noindent where the argument in the right-hand side lies in $Ker\phi$.
\end{proof}

\begin{lemma}\label{3.3}
\begin{equation}\notag
\overline\alpha (\free^3(X(A)) \cap [\free(X(A)), \phi^{-1}(I \otimes A)]) = 
B(\free(X), I \cap \free^2(X)) \otimes HC_1(A) .
\end{equation}
\end{lemma}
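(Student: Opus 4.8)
The plan is to prove the two inclusions separately. Both sides involve the map $\overline\alpha$ applied to $\free^3(X(A))$, so the key is to understand how $[\free(X(A)),\phi^{-1}(I\otimes A)]$ interacts with the generators $u(a)$, $u\in I$, furnished by Lemma~2.1(2), and with the generators of $\mathcal F^2(X(A))$ of the form $[u(a),v(b)]$.

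For the inclusion ``$\subseteq$'', I would start from a generic element of $[\free(X(A)),\phi^{-1}(I\otimes A)]$. By Lemma~2.1, $\phi^{-1}(I\otimes A)$ is linearly generated modulo $Ker\phi$ by elements $u(a)$ with $u\in I$, and $Ker\phi$ itself is generated by the elements listed in Lemma~2.1(1). So a typical bracket is, modulo $[\free(X(A)),Ker\phi]$ (on which $\overline\alpha$ is controlled via Lemma~3.2 and the defining relations of $HC_1$), a sum of terms $[w(b),u(a)]$ with $u\in I$. Applying the definition~(3.2) of $\alpha$, such a bracket maps to $(w\vee u)\otimes(b\wedge a)$ type elements; since $u\in I$ and the element lies in $\free^3$, after passing to $B(\free(X))$ one lands in the subspace where one factor of the symmetric product comes from $I\cap\free^2(X)$, i.e. in $B(\free(X),I\cap\free^2(X))\otimes HC_1(A)$. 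The bracketing with elements of $Ker\phi$, together with the Jacobi-type relations packaged into $J(X(A))$ via Lemma~3.1, is exactly what makes $\overline\alpha$ well-defined on the quotient, so these extra terms do not enlarge the image beyond $HC_1(A)=(A\wedge A)/T(A)$.

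For the reverse inclusion ``$\supseteq$'', I would take a generator $(w\vee u)\otimes(b\wedge a)$ of $B(\free(X),I\cap\free^2(X))\otimes HC_1(A)$ with $w\in\free(X)$ and $u\in I\cap\free^2(X)$, and exhibit an explicit preimage in $\free^3(X(A))\cap[\free(X(A)),\phi^{-1}(I\otimes A)]$. The natural candidate is (a symmetrized version of) $[w(b),u(a)]$: this lies in $[\free(X(A)),\phi^{-1}(I\otimes A)]$ because $u(a)\in\phi^{-1}(I\otimes A)$ by Lemma~2.1(2); it lies in $\free^3(X(A))$ because $u\in\free^2(X)$ forces $u(a)\in\free^2(X(A))$, hence the bracket is in $[\free,\free^2]\subseteq\free^3$; and $\overline\alpha$ sends it to the desired element. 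One must also check that every element of $B(\free(X),I\cap\free^2(X))$ is a sum of such symmetric products $w\vee u$ with $u$ genuinely in $\free^2(X)$ — but this is immediate since $I\cap\free^2(X)$ already sits inside $\free^2(X)$, and the coinvariant relations $[\free(X),S^2(\free(X))]$ on the target are matched by bracketing the preimages with linear generators $x(1)$ inside $\free(X(A))$.

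The main obstacle I anticipate is the bookkeeping needed to show the ``$\subseteq$'' direction does not overshoot: one has to verify that when a bracket $[v(c),u(a)]$ with $u\in I$ but $u\notin\free^2(X)$ still happens to land in $\free^3(X(A))$, its $\overline\alpha$-image already lies in $B(\free(X),I\cap\free^2(X))\otimes HC_1(A)$ rather than merely in $B(\free(X),I)\otimes HC_1(A)$. Resolving this requires using that $\overline\alpha$ on $\free^3(X(A))$ factors through $B(\free(X),\free^2(X))\otimes HC_1(A)$ (the codomain of the restricted $\overline\alpha$ displayed just before Lemma~3.2), so the $\free^2(X)$-constraint on the non-$I$ factor is automatic and only the $I$-constraint must be tracked; combined with Lemma~3.1 and the relation~(3.4) inside it, which absorbs precisely the terms that would otherwise violate the symmetry, this closes the argument. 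The remaining computations — expanding~(3.2), checking the Jacobiator terms land in $T(A)$, and chasing the coinvariant relations — are routine and parallel to the proof of Lemma~3.1.
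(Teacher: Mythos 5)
Your overall skeleton is the same as the paper's: reduce via Lemma 2.1 to brackets $[w(b),v(a)]$ with $v\in I$ (the contributions of $Ker\,\phi$ vanish simply because the coefficient sums $\sum_j b_j$ are zero — this has nothing to do with Lemma 3.2 or with the relations defining $HC_1(A)$, as you suggest), and for the inclusion $\supseteq$ exhibit $[w(a),v(b)]$, $v\in I\cap\free^2(X)$, as an explicit preimage; that part of your proposal is fine, and your worry about matching the coinvariant relations is unnecessary since $\beta$ maps onto a quotient.

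The gap is at the one non-routine point, which you correctly flag but do not resolve: a bracket $[w(b),v(a)]$ with $w\in\free^2(X)$ and $v\in I$, $v\notin\free^2(X)$, lies in $\free^3(X(A))$, and its image $\overline{w\vee v}\otimes\overline{b\wedge a}$ a priori lies only in $B(\free(X),\free^2(X))\otimes HC_1(A)$. Your proposed fix — that ``the $\free^2(X)$-constraint on the non-$I$ factor is automatic and only the $I$-constraint must be tracked,'' with (3.4) ``absorbing'' the offending terms — misstates the difficulty and is not an argument: what is needed is one and the same factor lying in $I\cap\free^2(X)$, and neither factor does. The paper closes this with a one-line computation you never state: modulo $[\free(X),S^2(\free(X))]$ (equivalently, by the invariance relation encoded in (3.4)), $\free^2(X)\vee I\equiv\free(X)\vee[I,\free(X)]\subseteq\free(X)\vee(I\cap\free^2(X))$; concretely, writing $w=[w_1,w_2]$ one has $\overline{[w_1,w_2]\vee v}=-\,\overline{w_2\vee[w_1,v]}$ with $[w_1,v]\in[\free(X),I]\subseteq I\cap\free^2(X)$. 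So (3.4) is indeed the right tool, but it must be used to transfer the bracket from the $\free^2$-factor onto the $I$-factor; without this explicit rewriting the inclusion $\subseteq$ — and hence the lemma — is not established.
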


\begin{proof}
According to Lemma 2.1, $\overline\alpha(\free^3(X(A)) \cap [\free(X(A)), \phi^{-1} (I \otimes A)])$
consists from the linear span of the following elements:

\begin{equation}\notag
\overline{u \vee v} \otimes \overline{a \wedge b}
\end{equation}

\noindent where either $u \in \free^2(X), v \in I$ or $u \in \free(X), v \in I \cap \free^2(X)$, and

\begin{equation}\notag
\sum_j \overline{u \vee v} \otimes \overline{a \wedge b_j}
\end{equation}


\noindent where $\sum_j b_j = 0$. The last expression obviously vanishes.

Modulo $[\free(X), S^2(\free(X))]$ we have:

\begin{equation}\notag
\free^2(X) \vee I \equiv \free(X) \vee [I, \free(X)] \subseteq \free(X) \vee (I \cap \free^2(X)),
\end{equation}

\noindent which implies the assertion of Lemma.
\end{proof}

Lemma 3.3 implies that the mapping $\overline\alpha$, being restricted to \break
$\free^3(X(A)) \cap \phi^{-1}(I \otimes A)$ and factored through 
$\free^3(X(A)) \cap [\free(X(A)), \phi^{-1} (I \otimes A)]$,
gives rise to a surjection

\begin{equation}\tag{3.5}
\beta: \frac{\free^3(X(A)) \cap \phi^{-1}(I \otimes A)}
            {\free^3(X(A)) \cap [\free(X(A)), \phi^{-1}(I \otimes A)]} \to
       \frac{B(\free(X), \free^2(X))}{B(\free(X), I \cap \free^2(X))} \otimes HC_1(A).
\end{equation}

The right-hand side here is by (1.3) isomorphic to $B(L,[L,L]) \otimes HC_1(A)$.

Further, according to Lemma 3.2, $\beta$ can be restricted to a surjection
\begin{equation}\tag{3.6}
\beta: \frac{\free^3(X(A)) \cap Ker\phi}
            {\free^3(X(A)) \cap [\free(X(A)), \phi^{-1}(I \otimes A)] \cap Ker\phi} \to
       B(L,[L,L]) \otimes HC_1(A).
\end{equation}

\begin{lemma}\label{3.4}
$\beta$ in (3.6) is injective.
\end{lemma}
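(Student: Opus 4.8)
The map $\beta$ in (3.6) is obtained from the surjection $\beta$ in (3.5) by restricting its source to the subspace coming from $\free^3(X(A))\cap Ker\phi$; so injectivity of (3.6) will follow once I understand the kernel of (3.5) and check that this kernel meets the ``$Ker\phi$-part'' trivially. My plan is therefore to compute $Ker\beta$ for the map (3.5) explicitly, using the description of $\alpha$ from (3.2) and of $\alpha(J(X(A)))$ from Lemma 3.1. Concretely, an element of the numerator of (3.5) is a sum of brackets $[u(a),v(b)]$ with the constraints coming from $\phi^{-1}(I\otimes A)$ (Lemma 2.1), and $\overline\alpha$ sends it to the class of $\sum \overline{u\vee v}\otimes\overline{a\wedge b}$ in $B(\free(X),\free^2(X))/B(\free(X),I\cap\free^2(X))\otimes HC_1(A)$. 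So I need to show: if such a sum maps to zero in $B(L,[L,L])\otimes HC_1(A)$, then the original element already lies in $\free^3(X(A))\cap[\free(X(A)),\phi^{-1}(I\otimes A)]$.

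The key step is to build a one-sided inverse, or at least a well-defined map in the reverse direction, from $B(L,[L,L])\otimes HC_1(A)$ back to the quotient on the left of (3.6). Given a generator $\overline{x_i\vee w}\otimes\overline{a\wedge b}$ with $w\in\free^2(X)$ representing an element of $[L,L]$, I would lift it to the cycle $\tfrac12([x_i(a),w(b)]-[x_i(b),w(a)])\in\free^3(X(A))\cap Ker\phi$ (the membership in $Ker\phi$ is exactly the identity used in the proof of Lemma 3.2, and membership in $\free^3$ holds because $w\in\free^2$). I must then check three things: (i) this assignment is independent of the representative $w$ of the given element of $[L,L]$ — changes by elements of $I$ or by reordering are absorbed into $[\free(X(A)),\phi^{-1}(I\otimes A)]$, using Lemma 2.2 for the cocycle-type relations and Lemma 3.3 for the $I$-part; (ii) it kills the relations defining $B(L,[L,L])$, i.e.\ the $[L,S^2(L)]$-action and the symmetrization, which is where Lemma 2.2 (first identity) does the bookkeeping; (iii) it kills $T(A)$, i.e.\ the relation $ab\wedge c+ca\wedge b+bc\wedge a$, which is precisely the content of the second identity in Lemma 2.2. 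Having such a map, one checks it is a two-sided inverse of $\beta$ by composing both ways on generators; the composite $\beta\circ(\text{lift})$ is visibly the identity by the formula for $\overline\alpha$, and the other composite is the identity because every generator of the source is, modulo $[\free(X(A)),\phi^{-1}(I\otimes A)]\cap Ker\phi$, in the image of the lift — this uses Lemma 3.2 to reduce a general element $[u(a),v(b)]$ to the antisymmetrized form lying in $Ker\phi$, and the reduction (3.4) to push $u$ down to a single generator times something in $\free^2$.

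The main obstacle I anticipate is item (i)–(iii) above: verifying that the candidate inverse is well defined, i.e.\ that all the defining relations of $B(L,[L,L])\otimes HC_1(A)$ are realized by explicit elements of $[\free(X(A)),\phi^{-1}(I\otimes A)]\cap Ker\phi$. This is exactly the point at which Lemma 2.2 was stated, so the two identities there should match the two families of relations ($S^2$-symmetrization / $[L,S^2(L)]$-invariance on one side, cyclic relation defining $HC_1(A)=A\wedge A/T(A)$ on the other). The remaining work — compatibility with the choice of lift of elements of $[L,L]$ to $\free^2(X)$, and with changing a representative by an element of $I$ — should reduce to Lemma 3.3 and a degree/multilinearity argument like the ones already used. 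Everything else (surjectivity of $\beta$, the identification of the target via (1.3)) is in hand, so the proof is the construction of the inverse and a routine check on generators.
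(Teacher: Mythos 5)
Your plan is correct and is essentially the paper's own argument in dual form: your lift $(x\vee y)\otimes(a\wedge b)\mapsto \tfrac12([u(a),w(b)]-[u(b),w(a)])$ is exactly the paper's map $i$ of (3.7) composed into the source of (3.6), your well-definedness checks (ii)--(iii) are precisely the paper's verification via Lemma 2.2 that $j\circ h\circ i$ kills $Ker\,n$, and your surjectivity-of-the-lift step is the paper's appeal to Lemmas 2.1 and 3.2 that the image of $j\circ h\circ i$ is all of the left-hand side of (3.6). (Only a cosmetic remark: the representative-independence in your item (i) follows directly from Lemma 2.1(2), since $w'(b)\in\phi^{-1}(I\otimes A)$ for $w'\in I$, rather than from Lemma 3.3.)
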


\begin{proof}
Denoting the left-hand side in (3.5) as $Frac$, consider the following diagram:

{\small
$$
\minCDarrowwidth 15pt
\begin{CD}
Ker([L,L] \otimes A \wedge L \otimes A \to L \otimes A) @>h>> H_2^{ess}(L \otimes A) @>j>> Frac  \\
@AiAA                                                         @.                           @V{\beta}VV \\
L \vee [L,L] \otimes A \wedge A @>n>>       {}  @. B(L,[L,L]) \otimes HC_1(A)
\end{CD}$$
}

\noindent where $h$ is the obvious factorization, $j$ is the isomorphism following from Lemma 1.2
applied to presentation (2.2), $n = l \otimes s$, where $l: L \vee [L, L] \to B(L,[L, L])$
and $s: A \wedge A \to HC_1(A)$ are obvious factorizations, and $i$ is defined as

\begin{equation}\tag{3.7}
i: (x \vee y) \otimes (a \wedge b) \mapsto 
   \frac 12 (x \otimes a \wedge y \otimes b - 
             x \otimes b \wedge y \otimes a)
\end{equation}

\noindent for $x \in [L, L], y \in L$.


The following calculation verifies the commutativity of this diagram:

\begin{align}
&  \beta \circ j \circ h \circ i ((x \vee y) \otimes (a \wedge b))        \notag \\
& = \frac 12 
  \beta \circ j \circ h   (x \otimes a \wedge y \otimes b - 
                           x \otimes b \wedge y \otimes a)                \notag \\
& = \frac 12 
  \beta \circ j (\overline{x \otimes a \wedge y \otimes b - 
                           x \otimes b \wedge y \otimes a})               \notag \\
& = \frac 12
  \beta \circ j (\overline{(u(a) + \phi^{-1}) \wedge (v(b) + \phi^{-1}) -
                           (u(b) + \phi^{-1}) \wedge (v(a) + \phi^{-1})}) \notag \\
& = \frac 12 \beta (\overline{[u(a), v(b)] - [u(b), v(a)]})               \notag \\
& = \frac 12 (\overline{(x \vee y) \otimes (a \wedge b) - 
                        (x \vee y) \otimes (b \wedge a)})                 \notag \\
& = \overline{x \vee y} \otimes \overline{a \wedge b}                     \notag \\
& = n ((x \vee y) \otimes (a \wedge b))                                   \notag 
\end{align}

\noindent where the overlined elements denote cosets in the corresponding factorspaces,
and $x = u + I, y = v + I$.

It is also clear from the previous calculation and Lemmas 2.1 and 3.2 that
the image of $j \circ h \circ i$ coincides with the left-hand side of (3.6).

Thus the kernel of the mapping (3.6) can be evaluated as

\begin{align}
Ker\beta &= j \circ h \circ i (Ker\, n)                                         \notag \\
        &= j \circ h \circ i (\langle [z,x] \vee y + [z,y] \vee x \rangle 
                              \otimes \langle a \wedge b \rangle               \notag \\
        &+ \langle [x,y] \vee z \rangle \otimes 
                 \langle ab \wedge c + ca \wedge b + bc \wedge a \rangle)      \notag \\
        &= j (\langle \overline{[z,x] \otimes a  \wedge y \otimes b -
                                [z,x] \otimes b  \wedge y \otimes a}           \notag \\
        &+            \overline{[z,y] \otimes a  \wedge x \otimes b - 
                                [z,y] \otimes b  \wedge x \otimes a} \rangle   \notag \\
        &+  \langle   \overline{[x,y] \otimes ab \wedge z \otimes c -
                                [x,y] \otimes c  \wedge z \otimes ab}          \notag \\
        &+            \overline{[x,y] \otimes ca \wedge z \otimes b - 
                                [x,y] \otimes b  \wedge z \otimes ca}          \notag \\
        &+            \overline{[x,y] \otimes bc \wedge z \otimes a -
                                [x,y] \otimes a  \wedge z \otimes bc} \rangle) \notag \\
        &= \langle \overline{[[w,u](a),  v(b)] - [[w,u](b), v(a)]}             \notag \\
        &+         \overline{[[w,v](a),  u(b)] - [[w,v](b), u(a)]} \rangle     \notag \\
        &+ \langle \overline{[[u,v](ab), w(c)] - [[u,v](c), w(ab)]}            \notag \\
        &+         \overline{[[u,v](ca), w(b)] - [[u,v](b), w(ca)]}            \notag \\
        &+         \overline{[[u,v](bc), w(a)] - [[u,v](a), w(bc)]} \rangle    \notag
\end{align}


\noindent (here $u = x + I, v = y + I, w = z + I$). The latter expression vanishes thanks
to Lemma 2.2.
\end{proof}

Putting together (3.1), (3.6) and Lemma 3.4, we get

\begin{proposition}\label{3.5}
$H_2^{ess}(L \otimes A) \simeq H_2^{ess}(L) \otimes A \>\oplus\> B(L,[L,L]) \otimes HC_1(A)$.
\end{proposition}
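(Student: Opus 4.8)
The plan is simply to splice the short exact sequence (3.1) together with the computation of its left-hand term already carried out in (3.5)--(3.6) and Lemma 3.4; no new idea is needed beyond bookkeeping.

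Recall that completing the $3\times 3$ diagram produced the exact sequence (3.1)
\[
0 \to \frac{\free^3(X(A)) \cap Ker\phi}{\free^3(X(A)) \cap [\free(X(A)), \phi^{-1}(I \otimes A)] \cap Ker\phi} \to H_2^{ess}(L \otimes A) \to \frac{\free^3(X)\cap I}{\free^3(X)\cap[\free(X),I]} \otimes A \to 0 ,
\]
whose right-hand term is $H_2^{ess}(L)\otimes A$ by Lemma 1.2 applied to the presentation $0 \to I \to \free(X) \to L \to 0$. So the only remaining task is to identify the left-hand term, which is exactly the source of the map $\beta$ of (3.6). I would note that $\beta$ in (3.6) is surjective by construction: it descends from the surjection $\overline\alpha\colon \free^3(X(A)) \twoheadrightarrow B(\free(X),\free^2(X)) \otimes HC_1(A)$ upon restricting the source to $\free^3(X(A)) \cap \phi^{-1}(I\otimes A)$ and then to $Ker\phi$ (which does not shrink the image, by Lemma 3.2), and factoring out $\free^3(X(A)) \cap [\free(X(A)),\phi^{-1}(I\otimes A)]$ (whose $\overline\alpha$-image is $B(\free(X), I\cap\free^2(X))\otimes HC_1(A)$, by Lemma 3.3); its target, via (1.3) applied to $0\to I\to\free(X)\to L\to 0$, is $B(L,[L,L])\otimes HC_1(A)$. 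By Lemma 3.4, $\beta$ in (3.6) is also injective. Hence $\beta$ is an isomorphism and the left-hand term of (3.1) is precisely $B(L,[L,L]) \otimes HC_1(A)$.

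Thus (3.1) becomes a short exact sequence of $K$-vector spaces
\[
0 \to B(L,[L,L]) \otimes HC_1(A) \to H_2^{ess}(L \otimes A) \to H_2^{ess}(L) \otimes A \to 0 ,
\]
and since every short exact sequence of vector spaces over a field splits, we conclude $H_2^{ess}(L \otimes A) \simeq H_2^{ess}(L) \otimes A \oplus B(L,[L,L]) \otimes HC_1(A)$. I do not expect a genuine obstacle at this stage: all the substantive work — the well-definedness and surjectivity of $\alpha$ and $\overline\alpha$, the description of $\alpha(J(X(A)))$ in Lemma 3.1, and above all the injectivity of $\beta$, which rested on the identities of Lemma 2.2 — has already been done in the preceding lemmas. (Should one want a canonical rather than an abstract splitting, a $K$-linear section of the unit inclusion $K \to A$ together with the evident functoriality of the construction in $A$ would furnish one, but for the stated vector-space isomorphism this is superfluous.)
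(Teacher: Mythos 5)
Your proof is correct and follows the paper's own route exactly: the paper also obtains Proposition 3.5 by combining the exact sequence (3.1) (whose right term is $H_2^{ess}(L)\otimes A$ by Lemma 1.2) with the surjection (3.6) and its injectivity from Lemma 3.4, so that the left term of (3.1) is identified with $B(L,[L,L])\otimes HC_1(A)$ and the sequence of $K$-vector spaces splits. Your additional remarks on where surjectivity of $\beta$ comes from (Lemmas 3.2, 3.3 and (1.3)) simply make explicit what the paper leaves implicit.
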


By Lemma 1.1 we have an exact sequence

\begin{multline}\tag{3.8}
0 \to H_2^{ess}(L \otimes A) \to H_2(L \otimes A) \to \wedge^2(L/[L, L] \otimes A) 
  \overset{\pi_A}\to [L,L]/[[L,L],L] \otimes A  \\
  \to 0.
\end{multline}

\begin{lemma}\label{3.6}
\begin{multline}\notag
Ker\pi_A \simeq Ker(\wedge^2(L/[L,L]) \overset{\pi}\to [L,L]/[[L,L],L]) \otimes A   \\
         \oplus\> \wedge^2(L/[L, L]) \otimes Ker(S^2(A) \to A) 
       \>\oplus\> S^2(L/[L,L]) \otimes \wedge^2(A).
\end{multline}
\end{lemma}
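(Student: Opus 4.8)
The plan is to analyse $\pi_A$ through the standard decomposition of a second exterior power of a tensor product. Abbreviate $V = L/[L,L]$ and $W = [L,L]/[[L,L],L]$; then $\pi_A$ is the map $\wedge^2(V\otimes A)\to W\otimes A$ sending $(x\otimes a)\wedge(y\otimes b)$ to $\pi(x\wedge y)\otimes ab$. Since $2$ is invertible in $K$, there is a natural isomorphism
\[
\wedge^2(V\otimes A)\simeq \bigl(S^2(V)\otimes\wedge^2(A)\bigr)\oplus\bigl(\wedge^2(V)\otimes S^2(A)\bigr),
\]
where the first summand is spanned by the elements $(x\otimes a)\wedge(y\otimes b)+(y\otimes a)\wedge(x\otimes b)$ and the second by $(x\otimes a)\wedge(y\otimes b)+(x\otimes b)\wedge(y\otimes a)$.

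First I would check, by evaluating $\pi_A$ on these two families of generators, that $\pi_A$ annihilates $S^2(V)\otimes\wedge^2(A)$ (one gets $\pi(x\wedge y)\otimes ab+\pi(y\wedge x)\otimes ab=0$) and that on $\wedge^2(V)\otimes S^2(A)$ it agrees, up to the factor $2$, with $\pi\otimes m$, where $m\colon S^2(A)\to A$ is induced by multiplication. This already gives $Ker\,\pi_A\simeq S^2(L/[L,L])\otimes\wedge^2(A)\oplus Ker(\pi\otimes m)$, so the remaining task is to identify $Ker(\pi\otimes m)$.

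For the second step I would use that $A$ has a unit: the map $s\colon A\to S^2(A)$, $a\mapsto a\vee 1$, is a linear section of $m$, so $S^2(A)=s(A)\oplus Ker\,m$ with $s(A)\simeq A$. Tensoring with $\wedge^2(V)$, the map $\pi\otimes m$ vanishes on $\wedge^2(V)\otimes Ker\,m$ and coincides with $\pi\otimes 1$ on $\wedge^2(V)\otimes s(A)\simeq\wedge^2(V)\otimes A$; hence $Ker(\pi\otimes m)\simeq Ker\,\pi\otimes A\oplus\wedge^2(L/[L,L])\otimes Ker(S^2(A)\to A)$. Substituting this back produces exactly the three summands in the statement.

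I do not anticipate a real obstacle; the one point requiring care is the last computation, since the crude formula $Ker(f\otimes g)=Ker\,f\otimes(\cdot)+(\cdot)\otimes Ker\,g$ would leave an overlapping term $Ker\,\pi\otimes Ker\,m$, and it is precisely the splitting of $m$ coming from the unit of $A$ (which is compatible with $Ker\,m$) that makes the decomposition a genuine direct sum. I would also keep track of the powers of $2$ entering the decomposition of $\wedge^2(V\otimes A)$, all of which are harmless since the characteristic is not $2$.
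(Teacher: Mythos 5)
Your proof is correct and follows essentially the same route as the paper: the same decomposition $\wedge^2(V\otimes A)\simeq S^2(V)\otimes\wedge^2(A)\oplus \wedge^2(V)\otimes S^2(A)$ reducing the problem to $Ker(\pi\otimes m)$, which is then split into $Ker\,\pi\otimes A$ and $\wedge^2(V)\otimes Ker\,m$. The paper packages these two steps as commutative diagrams with exact rows and columns (the direct sums being automatic for vector spaces), whereas you make the second splitting explicit via the unit section $a\mapsto a\vee 1$ of $m$; this is only a presentational difference.
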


\begin{proof}
The following commutative diagram with exact rows and columns

{\footnotesize
$$
\minCDarrowwidth 9pt
\begin{CD}
  @.                              @.          0                            \\    
@.     @.                                     @VVV            \\
  @.   S^2(L/[L,L]) \otimes \wedge^2(A) @>=>> S^2(L/[L,L]) \otimes \wedge^2(A)  @>{\pi}>>   0  \\
@.     @.                                     @ViVV                                    @VVV  \\
0 @>>> Ker\pi_A                         @>>>  \wedge^2(L/[L,L] \otimes A)       @>{\pi_A}>> [L,L]/[[L,L],L] \otimes A @>>> 0    \\
@.     @.                                     @VkVV                                        @|  \\ 
0 @>>> Ker(\pi \otimes m)               @>>>  \wedge^2 (L/[L,L]) \otimes S^2(A) @>{\pi \otimes m}>> [L,L]/[[L,L],L] \otimes A @>>> 0    \\
@.     @.                                     @VVV   \\
  @.                              @.          0      
\end{CD}$$
}

\noindent where $i$ is defined in (3.7), and

\begin{align}
k: x \otimes a \wedge y \otimes b &\mapsto (x \wedge y) \otimes (a \vee b)  \notag \\
m: a \vee b &\mapsto ab  \notag
\end{align}

\noindent for $x, y \in L/[L,L], a, b \in A$, implies

\begin{equation}\tag{3.9}
Ker\pi_A \simeq Ker(\pi \otimes m) \>\oplus\> S^2(L/[L,L]) \otimes \wedge^2(A).
\end{equation}


Considering the commutative diagram with exact rows and columns

{\footnotesize
$$
\minCDarrowwidth 9pt
\begin{CD}
  @.                              @.          0                            \\    
@.     @.                                     @VVV            \\
  @.   \wedge^2(L/[L,L]) \otimes Ker\,m @>=>> \wedge^2(L/[L,L]) \otimes Ker\,m  @>{\pi}>>   0  \\
@.     @.                                     @VVV                                    @VVV  \\
0 @>>> Ker(\pi \otimes m)                         @>>>  \wedge^2(L/[L,L]) \otimes S^2(A)       @>{\pi \otimes m}>> [L,L]/[[L,L],L] \otimes A @>>> 0    \\
@.     @.                                     @V{1 \otimes m}VV                                        @|  \\ 
0 @>>> Ker(\pi \otimes 1)               @>>>  \wedge^2(L/[L,L]) \otimes A @>{\pi \otimes 1}>> [L,L]/[[L,L],L] \otimes A @>>> 0    \\
@.     @.                                     @VVV   \\
  @.                              @.          0      
\end{CD}$$
}

\noindent we get

\begin{multline}\tag{3.10}
Ker(\pi \otimes m) \simeq \wedge^2(L/[L,L]) \otimes Ker(S^2(A) \to A)     \\
                   \oplus\> Ker(\wedge^2(L/[L,L]) \overset{\pi}\to [L,L]/[[L,L],L]) \otimes A.
\end{multline}

Putting (3.9) and (3.10) together proves the Lemma.
\end{proof}

Combining Proposition 3.5, (3.8) and Lemma 3.6, we get

\begin{multline}\notag
H_2(L \otimes A) \simeq H_2^{ess}(L) \otimes A 
                 \>\oplus\> Ker(\wedge^2(L/[L,L]) \to [L,L]/[L,[L,L]]) \otimes A   \\
                   \oplus\> B(L,[L,L]) \otimes HC_1(A) 
                 \>\oplus\> S^2(L/[L,L]) \otimes \wedge^2(A)                       \\  
                   \oplus\> \wedge^2(L/[L,L]) \otimes Ker(S^2(A) \to A).
\end{multline}

\noindent By Lemma 1.1 the first two terms here give $H_2(L) \otimes A$. Using a (noncanonical)
splitting $\wedge^2(A) = HC_1(A) \oplus T(A)$ and the exact sequence (1.4), the third and
fourth terms give $B(L) \otimes HC_1(A) \oplus S^2(L/[L,L]) \otimes T(A)$. Combining these
identifications gives Theorem 0.1.

\remark It is interesting to compare Theorem 0.1 with the two-dimensional
case of the homological operation

\begin{equation}\notag
H_n(L \otimes A) \to \bigoplus_{i+j=n-1} HC_i(U(L)) \otimes HC_j(A)
\end{equation}

\noindent defined in \cite{FT} ($U(L)$ is the universal enveloping algebra of $L$ and the ground
field assumed to be of characteristic zero). Taking $n = 2$, we obtain a mapping

\begin{equation}\tag{3.11}
H_2(L \otimes A) \to HC_1(U(L)) \otimes HC_0(A) \>\oplus\> HC_0(U(L)) \otimes HC_1(A).
\end{equation}


Cyclic homology of universal enveloping algebras was studied in \cite{FT} and
\cite{Kas2}. Using their results, we may observe that if $S(L)$ denotes the whole
symmetric algebra over $L$, then

\begin{equation}\notag
HC_0(U(L)) = H_0(L, S(L)) = S(L)/[L,S(L)]
\end{equation}

\noindent and $HC_1(U(L))$ is a certain factorspace of $H_1(L, S(L))$ containing $H_2(L)$.
This implies that in general (3.11) is neither injection, nor surjection. However, 
if $L = [L,L]$, then (3.11) is an injection.

\section{Computation of $B(L \otimes A)$}

Theorem 0.1 allows us to compute $B(L \otimes A)$ in terms of $L$ and $A$ (of
course, an alternative but longer proof may be given by means of direct computations).

\begin{theorem}\label{4.1}
$B(L \otimes A) \simeq B(L,[L,L]) \otimes A \>\oplus\> S^2(L/[L, L] \otimes A)$.
\end{theorem}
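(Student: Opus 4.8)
The plan is to reduce the computation of $B(L\otimes A)$ to Theorem 0.1 by exploiting the exact sequence (1.4) applied to the Lie algebra $L\otimes A$ in place of $L$. Recall that (1.4) reads
\begin{equation}\notag
0 \to B(M,[M,M]) \to B(M) \to S^2(M/[M,M]) \to 0
\end{equation}
for any Lie algebra $M$. Taking $M = L\otimes A$ and noting that $(L\otimes A)/[L\otimes A, L\otimes A] = (L/[L,L])\otimes A$ (since $A$ has a unit), this gives
\begin{equation}\notag
0 \to B(L\otimes A,[L\otimes A,L\otimes A]) \to B(L\otimes A) \to S^2(L/[L,L]\otimes A) \to 0.
\end{equation}
Since this sequence is (noncanonically) split, it suffices to identify the left-hand term $B(L\otimes A,[L\otimes A,L\otimes A])$ with $B(L,[L,L])\otimes A$.

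To do this I would invoke the duality between $B(\cdot)$ and symmetric invariant bilinear forms mentioned right after the definition of $B(L)$ in \S1, or, more directly, relate $B(L\otimes A, [L\otimes A, L\otimes A])$ to $H_2^{ess}$ of a suitable algebra. The cleanest route uses Proposition 3.5 in reverse: Proposition 3.5 computes $H_2^{ess}(L\otimes A)$, and its proof already produced, via the map $i$ of (3.7) and the factorization $j\circ h\circ i$ in the proof of Lemma 3.4, an explicit description of how elements of $B(L,[L,L])\otimes HC_1(A)$ sit inside $H_2^{ess}(L\otimes A)$. The term $B(L\otimes A,[L\otimes A,L\otimes A])$ is the space of $L\otimes A$-coinvariants of $(L\otimes A)\vee[L\otimes A,L\otimes A]$; spanning elements are $(x\otimes a)\vee(y\otimes b)$ with $y\in[L,L]$, and one computes the coinvariance relations coming from the $L\otimes A$-action. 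Carrying out the bookkeeping: the relation $[z\otimes c,(x\otimes a)\vee(y\otimes b)] = ([z,x]\otimes ca)\vee(y\otimes b) + (x\otimes a)\vee([z,y]\otimes cb)$ shows that modulo coinvariants one can move the $A$-factors around freely, and combined with symmetry of $\vee$ this collapses $(x\otimes a)\vee(y\otimes b)$ to depend only on $(x\vee y)$ modulo $[L,S^2(L)]$ tensored with the product $ab\in A$. This should yield a natural surjection $B(L,[L,L])\otimes A \twoheadrightarrow B(L\otimes A,[L\otimes A,L\otimes A])$.

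For injectivity of that surjection — which is the main obstacle — I would construct an explicit map in the other direction, analogous to the map $i$ of (3.7): send a generator $(x\vee y)\otimes a$ with $y\in[L,L]$ to (the class of) $(x\otimes a)\vee(y\otimes 1)$, and check this is well defined, i.e., that it kills the coinvariance relations defining $B(L,[L,L])$. The nontrivial point is that the relation $[z\otimes 1,(x\otimes a)\vee(y\otimes 1)]$ in $B(L\otimes A, \cdot)$ reproduces exactly the relation $[z,(x\vee y)]\otimes a$ in $B(L,[L,L])\otimes A$, which is a direct check using that $1\in A$. Since the composite of the two maps in either order is the identity on generators, this establishes the isomorphism $B(L\otimes A,[L\otimes A,L\otimes A]) \simeq B(L,[L,L])\otimes A$.

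Finally, substituting this isomorphism into the split exact sequence above yields
\begin{equation}\notag
B(L\otimes A) \simeq B(L,[L,L])\otimes A \>\oplus\> S^2(L/[L,L]\otimes A),
\end{equation}
which is the assertion of Theorem 4.1. I expect the only genuinely delicate step to be the verification that the candidate inverse map respects all the coinvariance relations; everything else is a routine unwinding of the definitions of $B(\cdot,\cdot)$ and of the bracket on $L\otimes A$, together with the fact that $A$ is unital so that $L/[L,L]$ and $[L,L]/[[L,L],L]$ tensor up correctly.
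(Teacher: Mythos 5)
Your reduction via the exact sequence (1.4) applied to $L\otimes A$ is exactly how the paper concludes, but your route to the key isomorphism $B(L\otimes A,[L\otimes A,L\otimes A])\simeq B(L,[L,L])\otimes A$ is genuinely different: the paper never computes these coinvariants directly. It takes an auxiliary commutative unital algebra $A'$ with $HC_1(A')\simeq K$, evaluates $H_2^{ess}(L\otimes A\otimes A')$ in two ways via Proposition 3.5 (once viewing the algebra as $(L\otimes A)\otimes A'$, once as $L\otimes(A\otimes A')$ together with the K\"unneth formula $HC_1(A\otimes A')\simeq HC_1(A)\otimes A'+A\otimes HC_1(A')$), and cancels the common summands using naturality. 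Your direct computation is precisely the ``alternative but longer proof'' the paper alludes to, and it can be completed; indeed the map in the easy direction, $(x\otimes a)\vee(y\otimes b)\mapsto \overline{x\vee y}\otimes ab$, is cleaner than you indicate, since it is the restriction of a map $B(L\otimes A)\to B(L)\otimes A$ defined on all of $S^2(L\otimes A)$ (commutativity of $A$ makes it kill $[L\otimes A,S^2(L\otimes A)]$).

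As written, however, your verification of the candidate inverse has two concrete gaps. First, the formula $(x\vee y)\otimes a\mapsto (x\otimes a)\vee(y\otimes 1)$ is asymmetric, so it is not even obviously well defined on $(L\vee[L,L])\otimes A$: when both $x$ and $y$ lie in $[L,L]$ you must know that $(x\otimes a)\vee(y\otimes 1)$ and $(y\otimes a)\vee(x\otimes 1)$ have the same class. Second, and more seriously, the relations you must annihilate are not only those of the form $[z,x\vee y]\otimes a$ with $y\in[L,L]$: in the paper $B(L,[L,L])$ is used as the image of $L\vee[L,L]$ in $B(L)$ (this is how Lemma 1.3 and (1.3) are applied), and since every $[z,u\vee v]=[z,u]\vee v+u\vee[z,v]$ already lies in $L\vee[L,L]$, the subspace to be killed is all of $[L,S^2(L)]\otimes A$ with $u,v\in L$ arbitrary; your check via $[z\otimes 1,(x\otimes a)\vee(y\otimes 1)]$ covers only part of it. Both points, as well as your assertion that ``one can move the $A$-factors around freely'' (which the single quoted relation does not by itself give) and the identity $\nu\circ\mu=\mathrm{id}$ on generators, are repaired by one auxiliary lemma you should state and prove: for $y\in[L,L]$, $(x\otimes a)\vee(y\otimes 1)\equiv(x\otimes 1)\vee(y\otimes a)$ modulo $[L\otimes A,S^2(L\otimes A)]$, proved by writing $y=[z,w]$ and applying the coinvariance relations for $z\otimes 1$ acting on $(x\otimes a)\vee(w\otimes 1)$ and for $z\otimes a$ acting on $(x\otimes 1)\vee(w\otimes 1)$, both sides being congruent to $-([z,x]\otimes a)\vee(w\otimes 1)$; here it is essential that the second factor is a commutator. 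With this lemma in place your argument closes and gives a complete, more elementary substitute for the paper's $A'$-and-K\"unneth trick.
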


\begin{proof}
It is more convenient to use Proposition 3.5 rather then Theorem 0.1
to obtain a formula for $B(L \otimes A, [L,L] \otimes A)$ and then to derive the general
case.

Take any commutative unital algebra $A^{\prime}$ with $HC_1(A^{\prime}) \simeq K$. According
to Proposition 3.5,

\begin{multline}\tag{4.1}
H_2^{ess}(L \otimes A \otimes A') \simeq H_2^{ess}(L \otimes A) \otimes A^{\prime} 
                                  \>\oplus\> B(L \otimes A, [L,L] \otimes A)       \\
                                  \simeq H_2^{ess}(L) \otimes A \otimes A^{\prime} 
                                  \>\oplus\> B(L,[L,L]) \otimes HC_1(A) \otimes A^{\prime}
                                  \>\oplus\> B(L \otimes A, [L,L] \otimes A).
\end{multline}

\noindent On the other hand,

\begin{multline}\tag{4.2}
H_2^{ess}(L \otimes A \otimes A') \simeq H_2^{ess}(L) \otimes A \otimes A^{\prime} 
                                  \>\oplus\> B(L,[L,L]) \otimes HC_1(A \otimes A^{\prime})  \\
                                  \simeq H_2^{ess}(L) \otimes A \otimes A^{\prime} 
                                  \>\oplus\> B(L,[L,L]) \otimes HC_1(A) \otimes A^{\prime}
                                  \>\oplus\> B(L,[L,L]) \otimes A.
\end{multline}

\noindent (the last isomorphism follows from the partial first-order commutative case
of the K\"unneth formula for cyclic homology (cf. \cite{Kas1}): 
$HC_1(A \otimes A') \simeq HC_1(A) \otimes A^{\prime} + A \otimes HC_1(A^{\prime})$).

Comparing (4.1) and (4.2), and using the naturality condition guaranteeing
compatibility, one has

\begin{equation}\notag
B(L \otimes A, [L,L] \otimes A) \simeq B(L,[L,L]) \otimes A.
\end{equation}

Now the assertion of Theorem easily follows from the last isomorphism and
the short exact sequence (1.4) applied to the algebra $L \otimes A$.
\end{proof}


\section{The second homology of $A \otimes B$}

Recall that given an associative algebra $A$, we may consider its associated
Lie algebra $A^{(-)}$ with the same underlying space $A$ and the bracket 
$[a,b] = ab-ba$, as well as a Jordan algebra $A^{(+)}$ with multiplication 
$a \circ b = \frac 12 (ab+ba)$.

Recall that 
$T(A) = \langle ab \wedge c + ca \wedge b + bc \wedge a | a, b, c \in A \rangle$. 
For the sake of convenience we will also use the following notation:

\begin{align}
   T (A,[A,A]) &= \frac{T(A) + [A,A] \wedge A}{[A,A] \wedge A}   \notag \\
HC_1 (A,[A,A]) &= \frac{A \wedge A}{[A, A] \wedge A + T(A)}      
            \simeq \frac{\wedge^2(A/[A,A])}{T(A,[A,A])}         \notag 
\end{align}

\noindent (the second one is an analogue of $H_2^{ess}(L)$ for cyclic homology).

The aim of this section is to prove the following

\begin{theorem}\label{5.1}
Let $A, B$ be associative algebras with unit over a field $K$ of
characteristic $p \ne 2$. Let $F(A,B)$ denote the direct sum of the following four
vector spaces:

\begin{enumerate}
\item $A[A,A]/[A,A] \otimes HC_1(B)$
\item $A/A[A,A] \otimes H_2(B^{(-)})$
\item $(Ker(S^2(A) \to A/[A,A]))/[A,S^2(A)] \otimes HC_1(B,[B,B])$
\item $Ker(S^2(A/[A,A]) \to A/A[A,A]) \otimes T(B,[B,B])$
\end{enumerate}

\noindent where arrows in (3) and (4) are induced by (associative or Jordan) multiplication
in $A$.

Then $H_2((A \otimes B)^{(-)}) \simeq F(A,B) \oplus F(B,A)$.
\end{theorem}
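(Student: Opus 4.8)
The plan is to mirror the strategy used for Theorem 0.1, but replacing the Hopf-formula machinery (which rested on the free \emph{Lie} algebra presentation) by direct computation in the space of $2$-cycles, since now the ambient object $(A\otimes B)^{(-)}$ is the Lie algebra attached to an associative algebra and the relevant "free" object is the free associative algebra. Concretely, I would first write down the standard chain complex computing $H_2$ of a Lie algebra: cycles inside $\wedge^2(A\otimes B)$ modulo boundaries coming from $\wedge^3(A\otimes B)$ under the Chevalley--Eilenberg differential $\partial(x\wedge y\wedge z)=[x,y]\wedge z+[z,x]\wedge y+[y,z]\wedge x$. The key structural point is that in $(A\otimes B)^{(-)}$ the bracket is $[a\otimes u,b\otimes v]=ab\otimes uv-ba\otimes vu$, which already begins to expose the asymmetry between $A$ and $B$; symmetrizing and antisymmetrizing in the roles of the two factors is what will eventually produce the splitting $F(A,B)\oplus F(B,A)$.

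Next I would filter the cycle space the same way \S1--\S3 filter $H_2$: separate the "essential" part (cycles not visible in the abelianization) from the part detected by the map to $\wedge^2$ of the abelianization of $(A\otimes B)^{(-)}$, which is $(A/[A,A])\otimes(B/[B,B])$ up to the usual identifications. The essential part should be analyzed by a $3\times 3$-diagram exactly as in the proof of Proposition 3.5, yielding the two "principal" summands: a term built from $A[A,A]/[A,A]$ tensored with $HC_1(B)$ (and its mirror), and a term built from $A/A[A,A]$ tensored with $H_2(B^{(-)})$ (and its mirror). The identification $HC_1(sl_n(A))\simeq HC_1(A)$ recovered at the end of the introduction is the sanity check here: for $B=M_n(K)$ one has $[B,B]=sl_n$, $B[B,B]=B$, $H_2(B^{(-)})=H_2(gl_n(K))=0$ for the relevant range, and the formula must collapse onto the Kassel--Loday result after the "slight modification" the author mentions. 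The non-principal summands (3) and (4), involving $\mathrm{Ker}(S^2(A)\to A/[A,A])$ modulo $[A,S^2(A)]$ and $\mathrm{Ker}(S^2(A/[A,A])\to A/A[A,A])$, I would extract exactly as in Lemma 3.6: by building the analogue of the commutative diagrams there, now with $m$ replaced by the associative/Jordan multiplication maps $S^2(A)\to A/[A,A]$ and $S^2(A/[A,A])\to A/A[A,A]$, and reading off the kernels.

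The technical heart — the analogue of Lemmas 2.2, 3.1, and 3.4 — is the verification that the relevant cyclic/Jacobi-type identities in the free associative algebra hold modulo boundaries. Here one needs the associative analogue of Lemma 2.2: the $\mathbb Z/3$-cyclic relators $ab\wedge c+ca\wedge b+bc\wedge a$ and the antisymmetrized "Leibniz" relators must be shown to lie in the image of $\partial$ after transporting through the presentation $(A\otimes B)^{(-)}$ as a quotient of a free Lie algebra on generators indexed by pairs. I expect this to be the main obstacle, for two reasons: first, bookkeeping — the bracket has \emph{two} terms ($ab\otimes uv$ and $-ba\otimes vu$) so every computation roughly doubles, and one must carefully track which term feeds the $F(A,B)$ side and which feeds the $F(B,A)$ side; second, the appearance of the \emph{Jordan} multiplication $S^2(A/[A,A])\to A/A[A,A]$ in summand (4) is delicate — it arises because the symmetric square interacts with the associative product through the $\tfrac12(ab+ba)$ combination, and disentangling $T(B,[B,B])$ from $HC_1(B,[B,B])$ requires the splitting $\wedge^2(B)=HC_1(B)\oplus T(B)$ (noncanonical, available since $\mathrm{char}\,K\neq 2$) passed carefully through the abelianization quotient. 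Once those identities are in hand, assembling the five-or-more summands and using the noncanonical splittings as in the final paragraph of \S3 (to fold $\wedge^2$ terms into $HC_1\oplus T$ pieces) gives the stated $F(A,B)\oplus F(B,A)$; the naturality/compatibility argument of Theorem 4.1 is the model for making the two mirror halves genuinely independent rather than artifacts of a choice.
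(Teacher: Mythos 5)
Your proposal correctly senses the spirit of the paper's argument (direct computation with cycles and boundaries rather than the Hopf formula, the role of the Jordan product $a\circ b=\tfrac12(ab+ba)$, the need for $\mathrm{char}\,K\neq 2$ to split $\wedge^2(B)$ as $HC_1(B)\oplus T(B)$), but it misses the single organizing device on which the whole proof rests: the short exact sequence $0\to \wedge^2 A\otimes S^2 B \to \wedge^2(A\otimes B)\xrightarrow{\;p\;} S^2 A\otimes \wedge^2 B\to 0$, with the identification $a_1\otimes b_1\wedge a_2\otimes b_2\leftrightarrow a_1\wedge a_2\otimes b_1\vee b_2 + a_1\vee a_2\otimes b_1\wedge b_2$. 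Applying $p$ to $\mathrm{Ker}\,d/\mathrm{Im}\,d$ gives a short exact sequence whose right-hand term $p(\mathrm{Ker}\,d)/p(\mathrm{Im}\,d)$ is computed to be $F(A,B)$, and whose left-hand term is handled by the cheap but essential observation that the complementary projection $p'$ onto $\wedge^2 A\otimes S^2 B$ is the identity on $\mathrm{Ker}\,p$, so that term equals $p'(\mathrm{Ker}\,d)/p'(\mathrm{Im}\,d)$ and is literally the same computation with $A$ and $B$ interchanged, giving $F(B,A)$. Your substitute filtrations do not deliver this: the primary decomposition you propose (essential cycles, i.e.\ the kernel of the map to $\wedge^2$ of the abelianization, as in \S\S 1--3) cuts across both $F(A,B)$ and $F(B,A)$ rather than separating them, and your plan to transport identities "through the presentation of $(A\otimes B)^{(-)}$ as a quotient of a free Lie algebra on generators indexed by pairs" reinstates exactly the Hopf-formula machinery the paper abandons here — and for a reason: the map $\phi$ of \S 2 depended on $A$ being commutative with a single well-defined product $a_1\cdots a_n$ attached to each monomial, and has no analogue when both factors are noncommutative.

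The second gap is that the technical heart is only named, not done. The proof's core is the determination of $p(\mathrm{Im}\,d)$ (Lemma 5.2): starting from the generic boundary relation $[a_1,a_2]\vee a_3\otimes (b_1\circ b_2)\wedge b_3 + (a_1\circ a_2)\vee a_3\otimes [b_1,b_2]\wedge b_3 + \text{(cyclic)}\equiv 0$, one symmetrizes in $a_1,a_2$, cyclically permutes, substitutes $a_3=1$ and $b_2=1$, and finally uses the Jordan identity $(a\circ b)\circ c-(a\circ c)\circ b=\tfrac14[a,[b,c]]$ to reduce everything to the four explicit families $[A,S^2(A)]\otimes B\wedge B$, $[A,A]\vee A\otimes T(B)$, $(a_1\vee a_2-1\vee a_1\circ a_2)\otimes [B,B]\wedge B$, and $A\vee A\otimes([b_1,b_2]\wedge b_3+[b_3,b_1]\wedge b_2+[b_2,b_3]\wedge b_1)$; only after this does the identification of $p(\mathrm{Ker}\,d)/p(\mathrm{Im}\,d)$ with the four summands of $F(A,B)$ become "routine". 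Your proposal flags this as "the main obstacle" but offers no route through it, so as written it stops exactly where the proof begins.
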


The proof is divided into several steps.

We employ the following short exact sequence:

\begin{equation}\notag
0 \to \wedge^2 A \otimes S^2B \overset{i}\to \wedge^2(A \otimes B) 
\overset{p}\to S^2A \otimes \wedge^2 B \to 0
\end{equation}

\noindent where the middle term is identified with the direct sum of two extreme ones
via

\begin{equation}\notag
a_1 \otimes b_1 \wedge a_2 \otimes b_2 \leftrightarrow 
a_1 \wedge a_2 \otimes b_1 \vee b_2 + a_1 \vee a_2 \otimes b_1 \wedge b_2,
\end{equation}

\noindent and $i$ and $p$ are obvious imbedding and projection respectively. In what
follows, this will be used without explicitly mentioning it.

The arguments are quite analogous to the ones at the beginning of \S 3. Here
they applied to $H_2((A \otimes B)^{(-)}) \simeq Ker\,d/Im\,d$ ($d$ is the differential in the
standard homology complex of $(A \otimes B)^{(-)}$). The mapping $p$ gives rise to the
following short exact sequence:

\begin{equation}\tag{5.1}
0 \to \frac{Ker\,p \cap Ker\,d}{Ker\,p \cap Im\,d} \to H_2((A \otimes B)^{(-)}) 
  \to \frac{p(Ker\,d)}{p(Im\,d)} \to 0.
\end{equation}


The Lie bracket on $A \otimes B$ may be written as a sum

\begin{equation}\notag
[a_1 \otimes b_1, a_2 \otimes b_2] = 
[a_1,a_2] \otimes b_1 \circ b_2 + a_1 \circ a_2 \otimes [b_1, b_2].
\end{equation}

The proof of the following statement is quite analogous to the proof of (3.10).

\addtocounter{theorem}{-1}
\begin{lemma}\label{lemma5.1}
\begin{equation}\notag
p(Ker\,d) \simeq 
Ker(A \vee A \to A/[A,A]) \otimes B \wedge B + A \vee A \otimes Ker(B \wedge B \to B)
\end{equation}

\noindent where the first arrow is induced by (associative or Jordan) multiplication in
algebra $A$ and the second one is the Lie multiplication in $B^{(-)}$.
\end{lemma}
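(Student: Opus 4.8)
The plan is to compute $p(\operatorname{Ker} d)$ directly from the description of the differential $d$ on the degree-$2$ chains $\wedge^2(A\otimes B)$, exactly paralleling the computation that produced (3.10)/Lemma 3.6 in \S 3. First I would write a general element of $\wedge^2(A\otimes B)$ using the fixed identification $\wedge^2(A\otimes B)\simeq \wedge^2 A\otimes S^2 B\>\oplus\> S^2A\otimes\wedge^2 B$, so that $p$ is simply projection onto the second summand. An element $\xi$ of this second summand, written as $\sum a\vee a'\otimes b\wedge b'$, lifts (non-canonically) to a chain whose $d$-image, via the bracket formula $[a_1\otimes b_1,a_2\otimes b_2]=[a_1,a_2]\otimes b_1\circ b_2 + a_1\circ a_2\otimes[b_1,b_2]$, splits into two pieces according to the two summands of the Lie bracket: one piece landing in $A\otimes B$ coming from $[A,A]\otimes(B\circ B)$, the other from $(A\circ A)\otimes[B,B]$. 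So $\xi\in p(\operatorname{Ker} d)$ iff there is a lift whose total $d$-image vanishes in $A\otimes B$.

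The key step is then to disentangle this vanishing condition into the two stated kernel conditions. I expect that, after choosing the lift in the second summand and absorbing the freedom coming from the first summand $\wedge^2 A\otimes S^2B$ (whose elements may contribute to $d$ but whose $p$-image is zero), the condition factors: the "$A$-part" of $\xi$ must be killed by Jordan (equivalently associative, since these agree modulo $[A,A]$) multiplication $A\vee A\to A/[A,A]$, and independently the "$B$-part" must be killed by Lie multiplication $B\wedge B\to B$. This is the reason the answer is a \emph{sum} (not a direct sum) $\operatorname{Ker}(A\vee A\to A/[A,A])\otimes B\wedge B + A\vee A\otimes\operatorname{Ker}(B\wedge B\to B)$ inside $S^2A\otimes\wedge^2 B$: an arbitrary cycle's $p$-image decomposes as a sum of a term of the first type and a term of the second type, but the two subspaces overlap in $\operatorname{Ker}(A\vee A\to A/[A,A])\otimes\operatorname{Ker}(B\wedge B\to B)$. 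I would verify both inclusions: $\supseteq$ by exhibiting explicit cycles (for the first subspace, lift $\zeta\otimes b\wedge b'$ with $\zeta\in\operatorname{Ker}(A\vee A\to A/[A,A])$ and check $d$ vanishes using that $a\circ a'\in[A,A]$ forces the first bracket-piece into $[A,A]\otimes\cdots$, which is then itself a boundary or is arranged to cancel; symmetrically for the second), and $\subseteq$ by the disentangling argument above.

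The main obstacle is bookkeeping of the non-canonical lift and of the first summand's contribution: one must check that the freedom of modifying the lift by an element of $\wedge^2 A\otimes S^2 B$ is exactly enough to reduce a general $p$-cycle to the stated form, and no more — i.e. that nothing extra survives beyond the two listed subspaces, and that everything in them is genuinely hit. This is the analogue of the diagram-chase bookkeeping in Lemma 3.6, and the phrase "quite analogous to the proof of (3.10)" in the excerpt signals that the author intends precisely this parallel; I would make the correspondence explicit (the role of $\pi\otimes m$ there being played here by the pair of multiplication maps on the $A$- and $B$-sides), and otherwise defer the routine verifications, noting as the paper does that we use diagram chasing and the $3\times 3$-Lemma without further comment.
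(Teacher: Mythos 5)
Your plan is essentially the approach the paper intends (the paper gives no written proof, saying only that it is "quite analogous to the proof of (3.10)"): under the identification $\wedge^2(A\otimes B)\simeq \wedge^2A\otimes S^2B \oplus S^2A\otimes\wedge^2B$ the differential splits as $d'+d''$ with $d''=\mu_A\otimes[\,\cdot\,,\cdot\,]_B$ on the second summand, so $p(Ker\,d)=(d'')^{-1}(Im\,d')$, and the stated (non-direct) sum is exactly this preimage, obtained by the same diagram-chase/linear-algebra disentangling as in (3.10). When writing it up, make explicit the one computation you leave implicit, namely $Im\,d'=[A,A]\otimes B$ (which uses the unit $1\in B$ and is precisely the source of the reduction modulo $[A,A]$ in the first kernel); the remaining step, that the preimage of $[A,A]\otimes B$ under $\mu_A\otimes[\,\cdot\,,\cdot\,]_B$ is $Ker(A\vee A\to A/[A,A])\otimes B\wedge B + A\vee A\otimes Ker(B\wedge B\to B)$, is the routine verification you describe and does work.
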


\begin{lemma}\label{5.2}
$p(Im\, d)$ is a linear span of the following elements:

\begin{enumerate}
\item $[A, S^2(A)] \otimes B \wedge B$
\item $[A,A] \vee A \otimes T(B)$
\item $(a_1 \vee a_2 - 1 \vee a_1 \circ a_2) \otimes [B,B] \wedge B, a_i \in A$
\item $A \vee A \otimes ([b_1,b_2] \wedge b_3 + [b_3,b_1] \wedge b_2 + [b_2,b_3] \wedge b_1), 
      b_i \in B$.
\end{enumerate}
\end{lemma}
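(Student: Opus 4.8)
The plan is to mimic the proof of Lemma~3.1. Since $d$ is the Chevalley--Eilenberg differential, $\mathrm{Im}\,d\subseteq\wedge^2(A\otimes B)$ is the span of the boundaries $[X,Y]\wedge Z+[Y,Z]\wedge X+[Z,X]\wedge Y$, and by trilinearity it is enough to take $X=a_1\otimes b_1$, $Y=a_2\otimes b_2$, $Z=a_3\otimes b_3$. Substituting the bracket formula $[a\otimes b,a'\otimes b']=[a,a']\otimes b\circ b'+a\circ a'\otimes[b,b']$ and applying $p$ (which sends $a\otimes b\wedge a'\otimes b'$ to $a\vee a'\otimes b\wedge b'$) one obtains
\[
p\bigl([X,Y]\wedge Z+[Y,Z]\wedge X+[Z,X]\wedge Y\bigr)=P_1+P_2 ,
\]
where $P_1=\sum_{\mathrm{cyc}}[a_1,a_2]\vee a_3\otimes(b_1\circ b_2)\wedge b_3$ (commutators in $A$, Jordan products in $B$) and $P_2=\sum_{\mathrm{cyc}}(a_1\circ a_2)\vee a_3\otimes[b_1,b_2]\wedge b_3$ is its mirror image. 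The whole proof reduces to analysing these two cyclic sums.

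For the inclusion $p(\mathrm{Im}\,d)\subseteq(1)+(2)+(3)+(4)$ I treat $P_1$ and $P_2$ separately. For $P_1$, the identity $[a_1,a_2]\vee a_3-[a_2,a_3]\vee a_1=[a_1,a_2\vee a_3]+[a_3,a_1\vee a_2]\in[A,S^2(A)]$ and its cyclic variants let one replace all three coefficients $[a_i,a_j]\vee a_k$ by $[a_1,a_2]\vee a_3$ modulo $[A,S^2(A)]\otimes(B\wedge B)=(1)$, leaving $[a_1,a_2]\vee a_3\otimes\bigl(\sum_{\mathrm{cyc}}(b_1\circ b_2)\wedge b_3\bigr)$; and $\sum_{\mathrm{cyc}}(b_1\circ b_2)\wedge b_3\in T(B)$ (it is a half-sum of two $T$-generators), so this lies in $[A,A]\vee A\otimes T(B)=(2)$. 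For $P_2$, the $A$-part of $(3)$ is precisely $\mathrm{Ker}\bigl(S^2(A)\to A\bigr)$ with the Jordan multiplication, equivalently $\{\xi-1\vee m(\xi)\}$ with $m$ the Jordan product; as the $B$-coefficients $[b_i,b_j]\wedge b_k$ lie in $[B,B]\wedge B$, one may replace each $(a_i\circ a_j)\vee a_k$ by $1\vee\bigl((a_i\circ a_j)\circ a_k\bigr)$ modulo $(3)$. The Jordan-associator identity $(a_1\circ a_2)\circ a_3-(a_2\circ a_3)\circ a_1\in[A,A]$, together with $1\vee[A,A]=[A,A]\vee1\subseteq[A,S^2(A)]$, then makes the three coefficients equal modulo $(1)$, collapsing $P_2$ to $1\vee\bigl((a_1\circ a_2)\circ a_3\bigr)\otimes\bigl([b_1,b_2]\wedge b_3+[b_3,b_1]\wedge b_2+[b_2,b_3]\wedge b_1\bigr)\in(4)$.

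For the reverse inclusion one evaluates $P_1+P_2$ on arguments with some $a_i$, $b_j$ set to $1$ and simplifies using: $B\wedge1\subseteq T(B)$ (the $T$-generator with last entry $1$ is $ab\wedge1$, and $B=1\cdot B$); every Lie Jacobiator $[b,b']\wedge b''+[b',b'']\wedge b+[b'',b]\wedge b'$ lies in $T(B)$ (it is a difference of two $T$-generators), and in particular $[b,b']\wedge1$ is the Jacobiator of $b,b',1$, hence of the shape occurring in $(4)$; and $[A,A]\vee1\subseteq[A,S^2(A)]$. One then bootstraps: the substitution $a_2=a_3=1$ gives $(a\vee1)\otimes\bigl([b_1,b_2]\wedge b_3+[b_3,b_1]\wedge b_2+[b_2,b_3]\wedge b_1\bigr)$; symmetrising the substitution $a_3=1$ over $b_1\leftrightarrow b_2$ gives $[a_1,a_2]\vee1\otimes(b_1\circ b_2)\wedge b_3$, hence $[A,A]\vee1\otimes(B\wedge B)\subseteq(1)$; reading the plain substitution $a_3=1$ modulo these two yields a generator of $(3)$ plus a generator of $(4)$, from which, using the first step, both $(3)$ and the general generators of $(4)$ follow; the substitution $X=a\otimes1$, $Y=a'\otimes b$, $Z=a''\otimes b'$, read modulo its $b\leftrightarrow b'$ symmetrisation and modulo $(4)$, yields the generator $[a,a'\vee a'']\otimes b\wedge b'$ of $(1)$; and a substitution carrying a commutator on the $A$-side against a Jordan product on the $B$-side gives $(2)$.

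The main obstacle is precisely this last bookkeeping: the subspaces $(1)$--$(4)$ overlap substantially — $[A,A]\vee1\otimes(B\wedge B)\subseteq(1)$, $[b,b']\wedge1$ lies in the span of $(4)$, $B\wedge1\subseteq T(B)$, and the Lie Jacobiators lie in $T(B)$ — so no single specialisation isolates one family, and each specialised boundary is a sum of generators drawn from several families; the task is to choose and combine the specialisations (and their symmetrisations) so that the resulting relations can be solved for the individual generators, in an order in which each claimed family uses only the previously established ones. The inclusion $\subseteq$, by contrast, follows routinely from the two displayed identities. (Throughout, the splitting $\wedge^2(A\otimes B)\simeq\wedge^2A\otimes S^2B\oplus S^2A\otimes\wedge^2B$ and the use of $\circ$ require $\mathrm{char}\,K\neq2$.)
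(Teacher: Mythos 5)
Your reduction giving the inclusion $p(\mathrm{Im}\,d)\subseteq(1)+(2)+(3)+(4)$ is correct: the identities $[a_1,a_2]\vee a_3-[a_2,a_3]\vee a_1\in[A,S^2(A)]$, $(a_1\circ a_2)\circ a_3-(a_2\circ a_3)\circ a_1\in[A,A]$, $1\vee[A,A]\subseteq[A,S^2(A)]$, and the fact that the cyclic Jordan sum and the Lie Jacobiator lie in $T(B)$ all check out, and this half is in substance the same reduction the paper performs on its generic relation (5.2). But the Lemma asserts an equality, and the reverse inclusion --- that each of the four families actually lies in $p(\mathrm{Im}\,d)$ --- is exactly where you stop at a sketch and yourself describe the remaining work as an unsolved bookkeeping task. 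Concretely, your chain breaks at the claim that the substitution $a_3=1$, read modulo the two previously established pieces, ``yields a generator of (3) plus a generator of (4), from which, using the first step, both (3) and the general generators of (4) follow.'' What that substitution gives is the single coupled relation $-\bigl(a_1\vee a_2-1\vee(a_1\circ a_2)\bigr)\otimes[b_1,b_2]\wedge b_3+(a_1\vee a_2)\otimes\bigl([b_1,b_2]\wedge b_3+[b_3,b_1]\wedge b_2+[b_2,b_3]\wedge b_1\bigr)\in p(\mathrm{Im}\,d)$ modulo the established elements, and you never show how to separate its two summands; the ``first step'' only provides $(a\vee 1)\otimes\bigl([b_1,b_2]\wedge b_3+[b_3,b_1]\wedge b_2+[b_2,b_3]\wedge b_1\bigr)$ and does not by itself decouple them --- one needs an extra move, e.g. summing the coupled relation cyclically over $b_1,b_2,b_3$ (which turns the first tensor factor into a Jacobiator as well), then invoking the first step and $p\neq 2$. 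Likewise the final clause ``a substitution carrying a commutator on the $A$-side against a Jordan product on the $B$-side gives (2)'' is an assertion, not an argument: family (2) comes out only after (1), (3), (4) are already known to lie in $p(\mathrm{Im}\,d)$, by reducing the full generic boundary modulo them and using $(a\circ b)\circ c-(a\circ c)\circ b=\frac14[a,[b,c]]$ together with $1\vee[A,A]\otimes B\wedge B\subseteq p(\mathrm{Im}\,d)$. Since (1) in general and (2) both depend on having (4) first, the reverse inclusion is genuinely incomplete as written.

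For comparison, the paper avoids the entanglement you run into by a different order of operations: it first symmetrizes the generic relation (5.2) in $a_1,a_2$ (getting (5.3)) and then sums (5.3) cyclically over $a_1,a_2,a_3$, which produces family (4) in full generality at once (relation (5.4)); only then does it substitute $a_3=1$ and $b_2=1$ into (5.3) to obtain families (3) and (1) (relations (5.5), (5.6)), take $a_2=1$ to get $1\vee[A,A]\otimes B\wedge B$ (5.7), and finally use the Jordan identity to convert what remains of (5.2) into family (2) (relation (5.9)). Your sketch can be completed either along these lines or with the cyclic-in-$b$ device indicated above, but as it stands the proof establishes only one of the two inclusions.
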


\begin{proof}
We adopt the notation $x \equiv 0$ denoting the fact that certain element $x$
of $A \vee A \otimes B \wedge B$ lies in $p(Im\,d)$. The generic relation defining the quotient
by $p(Im\,d)$ is

\begin{align}
  &[a_1,a_2]       \vee a_3 \otimes (b_1 \circ b_2) \wedge b_3 
+  (a_1 \circ a_2) \vee a_3 \otimes [b_1,b_2]       \wedge b_3            \notag \\
+ &[a_3,a_1]       \vee a_2 \otimes (b_3 \circ b_1) \wedge b_2 
+  (a_3 \circ a_1) \vee a_2 \otimes [b_3,b_1]       \wedge b_2            \tag{5.2} \\
+ &[a_2,a_3]       \vee a_1 \otimes (b_2 \circ b_3) \wedge b_1 
+  (a_2 \circ a_3) \vee a_1 \otimes [b_2,b_3]       \wedge b_1 \equiv 0.  \notag
\end{align}

Symmetrizing this relation with respect to $a_1, a_2$, we get:

\begin{multline}\tag{5.3}
  2(a_1 \circ a_2) \vee a_3 \otimes [b_1,b_2] \wedge b_3                      \\
+ ([a_3,a_1] \vee a_2 - [a_2,a_3] \vee a_1) \otimes 
((b_3 \circ b_1) \wedge b_2 - (b_2 \circ b_3) \wedge b_1)            \\
+ ((a_3 \circ a_1) \vee a_2 + (a_2 \circ a_3) \vee a_1) \otimes 
([b_3,b_1] \wedge b_2 + [b_2,b_3] \wedge b_1) \equiv 0.
\end{multline}

Cyclic permutations of $a_1, a_2, a_3$ in the last relation yield:

\begin{multline}\notag
((a_1 \circ a_2) \vee a_3 + (a_3 \circ a_1) \vee a_2 + (a_2 \circ a_3) \vee a_1)  \\
\otimes ([b_1,b_2] \wedge b_3 + [b_3,b_1] \wedge b_2 + [b_2,b_3] \wedge b_1) \equiv 0.
\end{multline}


This relation, in its turn, evidently implies

\begin{equation}\tag{5.4}
A \wedge A \otimes ([b_1,b_2] \wedge b_3 + [b_3,b_1] \wedge b_2 + [b_2,b_3] \wedge b_1) \equiv 0.
\end{equation}

Now rewriting (5.3) modulo (5.4) and substituting $a_3 = 1$ and $b_2 = 1$, we
get, respectively:

\begin{equation}\tag{5.5}
(a_1 \vee a_2 - 1 \vee a_1 \circ a_2) \otimes [B,B] \wedge B \equiv 0
\end{equation}

\noindent and

\begin{equation}\notag
([a_3,a_1] \vee a_2 - [a_2,a_3] \vee a_1) \otimes (b_1 \wedge b_3 + 1 \wedge b_1 \circ b_3) \equiv 0.
\end{equation}

Symmetrizing the last relation with respect to $b_1, b_3$, one gets:

\begin{equation}\tag{5.6}
([a_3,a_1] \vee a_2 - [a_2,a_3] \vee a_1) \otimes B \wedge B \equiv 0.
\end{equation}

Particularly, taking in (5.6) $a_2 = 1$, one gets

\begin{equation}\tag{5.7}
1 \vee [A,A] \otimes B \wedge B \equiv 0.
\end{equation}

Now, (5.2) is equivalent modulo (5.4)--(5.6) to

\begin{align}
&[a_1, a_2] \vee a_3 \otimes (b_1b_2 \wedge b_3 + b_3b_1 \wedge b_2 + b_2b_3 \wedge b_1) \notag \\
&+ 1 \vee ((a_1 \circ a_2) \circ a_3 - (a_2 \circ a_3) \circ a_1) \otimes 
          [b_1,b_2] \wedge b_3                                             \tag{5.8} \\
&+ 1 \vee ((a_3 \circ a_1) \circ a_2 - (a_2 \circ a_3) \circ a_1) \otimes 
          [b_3,b_1] \wedge b_2 \equiv 0.                                   \notag
\end{align}

Taking into account the identity

\begin{equation}\notag
(a \circ b) \circ c - (a \circ c) \circ b = \frac 14 [a,[b,c]]
\end{equation}

\noindent (cf. \cite{J}, p.37), and (5.7), the relation (5.8), in its turn, is equivalent to

\begin{equation}\tag{5.9}
[A,A] \vee A \otimes T(B) \equiv 0.
\end{equation}

Putting together (5.4)--(5.6) and (5.9), we get exactly the statement of the Lemma.
\end{proof}

\begin{lemma}\label{5.3}\hfill
\begin{enumerate}
\item $p(Ker\,d)/p(Im\,d) \simeq F(A,B)$.
\item $(Ker\,p \cap Ker\,d)/(Ker\,p \cap Im\,d) \simeq F(B,A)$.
\end{enumerate}
\end{lemma}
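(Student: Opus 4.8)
The plan is to match the two quotient spaces appearing in the short exact sequence (5.1) with the two summands $F(A,B)$ and $F(B,A)$ of Theorem 5.1. For part (1), I would start from Lemma 5.1, which gives $p(\mathrm{Ker}\,d)$ as the sum of $\mathrm{Ker}(A\vee A\to A/[A,A])\otimes B\wedge B$ and $A\vee A\otimes \mathrm{Ker}(B\wedge B\to B)$, and from Lemma 5.2, which lists the four families of generators of $p(\mathrm{Im}\,d)$. The task is then purely to compute the quotient. I would organize the computation along the decomposition of $A\vee A\otimes B\wedge B$ coming from the direct-sum decomposition of each tensor factor adapted to the maps in Lemma 5.1: write $A\vee A = \mathrm{Ker}(A\vee A\to A/[A,A]) \oplus (\text{complement}) $ and $B\wedge B = \mathrm{Ker}(B\wedge B \to B) \oplus (\text{complement})$, and split $\mathrm{Ker}(B\wedge B\to B)$ further using the (noncanonical) splitting $\wedge^2 B = HC_1(B)\oplus T(B)$ already exploited at the end of \S 3. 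Relation (5.4) of Lemma 5.2 kills $A\wedge A\otimes T(B)$-type pieces down to $[A,A]\vee A\otimes T(B)$ via relation (2) of Lemma 5.2; relation (1), $[A,S^2(A)]\otimes B\wedge B$, collapses the $A\vee A$ factor to $A\vee A/[A,S^2(A)]$; relations (3) and (5.5) identify the $\mathrm{Ker}(B\wedge B\to B)$-component with a piece involving $1\vee(a_1\circ a_2)$, i.e. with $A/A[A,A]$ once one remembers that $1\vee$ of a Jordan-product ideal behaves like passing to $A/A[A,A]$. Carefully bookkeeping which generator of $p(\mathrm{Im}\,d)$ acts on which summand, one reads off exactly the four spaces (1)--(4) defining $F(A,B)$: the $HC_1(B)$ term pairs with $A[A,A]/[A,A]$, the $H_2(B^{(-)})$ term (which is $\mathrm{Ker}(B\wedge B\to B)/p(\mathrm{Im}\,d\text{-relations})$, by the Lie analogue of the Hopf/cycle computation) pairs with $A/A[A,A]$, the $HC_1(B,[B,B])$ term pairs with $(\mathrm{Ker}(S^2(A)\to A/[A,A]))/[A,S^2(A)]$, and the $T(B,[B,B])$ term pairs with $\mathrm{Ker}(S^2(A/[A,A])\to A/A[A,A])$.

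For part (2), I would run the mirror-image argument, but now for $\mathrm{Ker}\,p$ rather than the image of $p$: the identification in the excerpt writes the middle term $\wedge^2(A\otimes B)$ as $\wedge^2 A\otimes S^2 B \oplus S^2 A\otimes \wedge^2 B$, so $\mathrm{Ker}\,p = \wedge^2 A\otimes S^2 B$. Restricting the cycle condition $d$ and the boundaries $\mathrm{Im}\,d$ to this subspace, one obtains, by the same sort of manipulation (symmetrizing the generic relation, cyclically permuting, substituting $1$ for various arguments, and invoking the Jordan identity $(a\circ b)\circ c - (a\circ c)\circ b = \tfrac14[a,[b,c]]$), the analogues of Lemmas 5.1 and 5.2 with the roles of $A$ and $B$ interchanged; the resulting quotient is then $F(B,A)$ by exactly the reasoning of part (1). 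It is worth noting that the asymmetry between $\wedge^2 A\otimes S^2 B$ and $S^2 A\otimes \wedge^2 B$ in the splitting is precisely what makes $F(A,B)$ and $F(B,A)$ appear on the two sides, and I would phrase part (2) so as to make transparent that it is literally part (1) read through the involution swapping the two tensor factors of $A\otimes B$.

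Once both parts are in hand, Theorem 5.1 follows at once: plug $p(\mathrm{Ker}\,d)/p(\mathrm{Im}\,d)\simeq F(A,B)$ and $(\mathrm{Ker}\,p\cap\mathrm{Ker}\,d)/(\mathrm{Ker}\,p\cap\mathrm{Im}\,d)\simeq F(B,A)$ into the short exact sequence (5.1) and split it (the splitting is available because all the building blocks are $K$-vector spaces; no canonical choice is needed, just as in Theorem 0.1). I expect the main obstacle to be part (1), and within it, the bookkeeping that shows the relations of Lemma 5.2, once restricted to each of the several summands of $A\vee A\otimes B\wedge B$, cut it down to exactly the four spaces listed and no more — in particular, verifying that no further relations are hidden (i.e. that the candidate isomorphism is injective, not merely surjective) will require checking that each family in Lemma 5.2 is "used up" on its own summand and does not impose extra collapsing elsewhere. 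The characteristic $p\neq 2$ hypothesis enters through the symmetrization steps (dividing by $2$) and through the coefficient $\tfrac14$ in the Jordan identity, so one should flag the single place where it is needed. The algebraic content beyond that is the same cycle-counting in factorspaces that powered \S 3, so I would keep the exposition parallel to that section and not re-derive the shared lemmas from scratch.
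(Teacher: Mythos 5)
Your part (1) is fine and is essentially what the paper does: Lemma 5.3(1) is obtained from Lemmas 5.1 and 5.2 by exactly the term-by-term bookkeeping you sketch (the paper compresses this into the phrase ``routine transformations''), and so is your final step of splitting the sequence (5.1).

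The gap is in part (2). You propose to ``restrict the cycle condition $d$ and the boundaries $Im\,d$ to $Ker\,p=\wedge^2A\otimes S^2B$'' and assert that this yields the analogues of Lemmas 5.1 and 5.2 with $A$ and $B$ interchanged, so that part (2) is ``literally part (1) read through the involution''. It is not: the mirrored Lemmas 5.1 and 5.2 (i.e.\ part (1) applied to $B\otimes A$) compute $p'(Ker\,d)$ and $p'(Im\,d)$, where $p':\wedge^2(A\otimes B)\to\wedge^2A\otimes S^2B$ is the projection onto the \emph{other} summand, whereas the space you must handle is $(Ker\,p\cap Ker\,d)/(Ker\,p\cap Im\,d)$, and intersecting with $Ker\,p$ is not the same as projecting by $p'$. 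The differential couples the two summands: $d$ sends $\wedge^2A\otimes S^2B$ into $[A,A]\otimes B$ and $S^2A\otimes\wedge^2B$ into $A\otimes[B,B]$, and these overlap in $[A,A]\otimes[B,B]$, so a cycle (or boundary) need not have each component separately a cycle (or boundary). Concretely, $Ker\,p\cap Im\,d$ consists only of boundaries lying wholly inside $\wedge^2A\otimes S^2B$ and is a priori smaller than $p'(Im\,d)$; likewise $Ker\,p\cap Ker\,d$ (elements genuinely killed by $d$) is smaller than $p'(Ker\,d)$ (elements whose $d$-image can be cancelled from the other summand --- this cancellation is precisely why Lemma 5.1 involves $Ker(A\vee A\to A/[A,A])$ rather than $Ker(A\vee A\to A)$). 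The missing step --- and the only real content of part (2) beyond the $A\leftrightarrow B$ symmetry --- is the bridge identifying $(Ker\,p\cap Ker\,d)/(Ker\,p\cap Im\,d)$ with $p'(Ker\,d)/p'(Im\,d)$. The paper supplies it by introducing $p'$, observing that $p'$ is the identity on $Ker\,p$, and arguing that the two subquotients coincide; only after that does the involution apply, via part (1) for $B\otimes A$, to give $F(B,A)$. As written, your mirror-image restriction computes the wrong numerator and denominator, so this identification must be stated and proved (it is the one place in the lemma where something could actually fail).
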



\begin{proof}
(1) is derived from Lemmas 5.1 and 5.2 after a number of routine transformations.

(2) Define a projection $p^\prime: \wedge^2 (A \otimes B) \to \wedge^2 A \otimes S^2 B$. 
Due to an obvious fact that $p^\prime$ is the identity on $Ker\,p = A \wedge A \otimes B \vee B$, 
we have an isomorphism

\begin{equation}\notag
\frac{Ker\,d \cap Ker\,p}{Ker\,p \cap Im\,d} \simeq 
\frac{p^\prime(Ker\,d \cap Ker\,p)}{p^\prime(Ker\,p \cap Im\,d)} =
\frac{p^\prime(Ker\,d)}{p^\prime(Im\,d)}
\end{equation}

But the right-hand term here is computed as in the part (1), up to permutation of 
$A$ and $B$.
\end{proof}

Now Theorem 5.1 follows immediately from (5.1) and Lemma 5.3.

\bigskip
\remark 
Taking in Theorem 5.1 $B = M_n(K)$, we get, after a series of 
elementary transformations, an isomorphism

\begin{equation}\notag
H_2(gl_n(A)) \simeq HC_1(A) \oplus \wedge^2 (A/[A, A]).
\end{equation}

Using the Hochschild-Serre spectral sequence associated with central extension

$$\notag
0 \to sl_n(A) \to gl_n(A) \to A/[A,A] \to 0
$$

\noindent of Lie algebras, we derive

$$\notag
H_2(sl_n(A)) \simeq HC_1(A)
$$

\noindent which is a result of C. Kassel and J.-L. Loday \cite{KL}.

\bigskip\bigskip
\begin{flushright}\parbox{2.6in}{
Paul Zusmanovich

Department of Mathematics

Bar-Ilan University

Ramat-Gan 52900, Israel

e-mail: \texttt{zusman@bimacs.cs.biu.ac.il}
}\end{flushright}

\end{document}